\newtheorem{theorem}{Theorem}
\newtheorem{corollary}{Corollary}
\newcommand{\STS@text}[1]{{\rm STS}$(#1)$}
\newcommand{\STS@unstarred}[1]{\ifmmode{\text{\mbox{\STS@text{#1}}}}\else{\mbox{\STS@text{#1}}}\fi}
\newcommand{\STS@starred}[1]{\ifmmode{\text{\mbox{sub-\STS@text{#1}}}}\else{\mbox{sub-\STS@text{#1}}}\fi}
\newcommand{\STSS@unstarred}[1]{\ifmmode{\text{\mbox{\STS@text{#1}s}}}\else{\mbox{\STS@text{#1}s}}\fi}
\newcommand{\STSS@starred}[1]{\ifmmode{\text{\mbox{sub-\STS@text{#1}s}}}\else{\mbox{sub-\STS@text{#1}s}}\fi}
\newcommand{\STS}{\@ifstar{\STS@starred}{\STS@unstarred}}
\newcommand{\STSS}{\@ifstar{\STSS@starred}{\STSS@unstarred}}
\newcommand{\mypoint}[1]{\mathtt{#1}}
\begin{document}

\title{Algorithms and Complexity for Counting Configurations in Steiner Triple Systems}

\author{
Daniel Heinlein\thanks{Supported by the Academy of Finland, Grant 331044.}\phantom{ } and Patric R. J. \"Osterg\aa rd\\
Department of Communications and Networking\\
Aalto University School of Electrical Engineering\\
P.O.\ Box 15400, 00076 Aalto, Finland\\
\tt \{daniel.heinlein,patric.ostergard\}@aalto.fi
}

\date{}

\maketitle

\begin{abstract}
  Steiner triple systems form one of the most studied classes of
  combinatorial designs.
  Configurations, including subsystems, play a central role in the investigation
  of Steiner triple systems. With sporadic instances of small systems,
  ad-hoc algorithms for counting or listing configurations are typically fast enough
  for practical needs,
  but with many systems or large systems,
  the relevance of computational complexity and
  algorithms of low complexity is highlighted. General theoretical
  results as well as specific practical algorithms for important
  configurations are presented.
\end{abstract}

\noindent
    {\bf Keywords:} algorithm, computational complexity,
configuration, Steiner triple system, subsystem

\noindent
    {\bf MSC:} 05B07, 68Q25

\section{Introduction}

A \emph{Steiner triple system} (STS) is an ordered pair $(V,\mathcal{B})$,
where $V$ is a set of \emph{points} and $\mathcal{B}$ is a
set of \mbox{3-subsets} of points, called \emph{blocks} or \emph{lines}, such that
every \mbox{2-subset} of points occurs in exactly one block. The size of
the point set is the \emph{order} of the Steiner triple system, and a
Steiner triple system of order $v$ is denoted by \STS{v}.
It is well known that an \STS{v} exists iff
\begin{equation}\label{eq:sts}
v \equiv 1\text{ or }3\!\!\!\pmod{6}.
\end{equation}
An \STS{v} has $v(v-1)/6$ blocks and each point is in
$(v-1)/2$ blocks.
For more information about Steiner triple systems, see~\cite{C,CR}.

A \emph{configuration} in a $(V,\mathcal{B})$ \STS{v}
is a set system $(V',\mathcal{B'})$, where
$\mathcal{B'} \subseteq \mathcal{B}$ and $V' = \cup_{B \in \mathcal{B'}}B$.
For configurations, we
adopt the convention of calling the  elements of $\mathcal{B'}$ lines.
If each point in $V'$ occurs in at least two lines, then the configuration
is said to be \emph{full}.
A configuration that is an \STS{v} is called an \STS{v} \mbox{\emph{subsystem}}, or
a \emph{sub}-\STS{w}, and is
said to be proper if $w < v$ and nontrivial if $w > 3$.
A configuration with $w$ lines such that
each point is in three of the lines is a $w_3$ configuration~\cite{G}.
Double counting shows that the size of the point set of a $w_3$
configuration is $w$.

The computational problem of finding configurations
in designs is recurrent
in design theory. For example,
the problem of finding maximal arcs in projective planes of order 16,
studied in~\cite{GMT}, is about finding 2-(52,4,1) subdesigns in
2-(273,17,1) designs. Similar computational problems also occur in discrete
geometry~\cite{BP}.
We shall here focus explicitly on configurations in Steiner triple
systems, motivated by a need in~\cite{HO2} for algorithms to count
configurations in many Steiner triple systems with orders that are
large---even in the thousands. Earlier work in this area has mainly concerned
subsystems of Steiner triple systems~\cite{CCS}.

An \STS{v} is said to be \emph{isomorphic} to another \STS{v} if
there exists a bijection between the point sets that maps blocks
onto blocks; such a bijection is called an \emph{isomorphism}.
An isomorphism of a Steiner triple system onto itself
is an \emph{automorphism} of the Steiner triple system.
The automorphisms of a Steiner triple system form a group
under composition, the \emph{automorphism group} of the Steiner triple system.
These concepts are defined analogously for configurations.

The paper is organized as follows. In Section~\ref{sect:count},
the time complexity of counting and listing configurations in Steiner triple
systems is considered.
With fixed (sets of) configurations, these problems are in {\bf P}.
Polynomial upper bounds on the time complexity are obtained by developing
algorithms. The number of occurrences of all $n$-line configurations can be
obtained as a function of the number of occurrences of all full $m$-line
configurations with $m \leq n$.
The conjecture that a subset of the full $m$-line configurations does not
suffice has earlier been verified for $n \leq 7$, which is here extended to
$n \leq 8$.
Practical aspects are not addressed in the theoretical proofs, so
Section~\ref{sect:practice} is devoted to practical counting algorithms for
several specific small configurations.
In particular, an approach is developed for constructing an exhaustive
set of algorithms of a certain type. The algorithms
are compared experimentally, and the winning algorithms
are displayed for nine important configurations.

\section{Counting Configurations}\label{sect:count}

\subsection{Problem and Algorithms}

The computational problem studied here is as follows, where $\mathcal{C}$
is any \emph{fixed} set of configurations.

\vspace{5mm}
\noindent
\begin{minipage}{\textwidth}
{\bf Problem:} $P(\mathcal{C})$\\
{\bf Input:} A Steiner triple system $\mathcal{S}$ of order $v$\\
{\bf Output:} The number of occurrences of
the configurations in $\mathcal{S}$ that are isomorphic
to a configuration in~$\mathcal{C}$
\end{minipage}

\vspace{5mm}
The motivation for the work is that of \emph{counting}
configurations, but we will also address the problem of \emph{listing}
configurations in Steiner triple systems. The listed configurations can
obviously simultaneously be counted, but the fastest (known) counting algorithm
has in many cases smaller time complexity than the fastest (known) listing algorithm.
For example, considering a configuration with just one line,
an optimal algorithm simply lists all lines of
the \STS{v}, which takes $\Theta(v^2)$, whereas counting is simply a matter of
evaluating $v(v-1)/6$.

Indeed, for certain configurations the number of occurrences in an \STS{v}
only depends on $v$. Such configurations are called \emph{constant}. A configuration
that is not constant is said to be \emph{variable}. Small configurations in \STSS{v}
are surveyed in~\cite[Ch.~13]{CR}. All configurations with three
or fewer lines are constant and so are the members of five infinite families
presented in~\cite{HPWY}. A complete characterization of constant configurations
is still missing.

The fact that the set of configurations $\mathcal{C}$ is fixed gives
possibilities of simplifying proofs. For example, determining
whether two configurations are isomorphic can be done in constant time.
As the goal is to establish theoretical bounds, we do not make
any attempts to develop practical algorithms in this section but defer
such issues to Section~\ref{sect:practice}.

It is straightforward to see that Problem~$P(\mathcal{C})$ is in {\bf P}. Namely,
if there are at most $m$ lines in the configurations in $\mathcal{C}$, then
we can explore all subsets of at most $m$ lines of $\mathcal{S}$ and there
are $\Theta(v^{2m})$ such subsets. However, it will turn out that
this upper bound is weak, and better upper bounds---also in the context
of listing---will be obtained.
As $\mbox{\bf P} \subseteq \mbox{\bf PSPACE}$, the space complexity will also
be polynomial in all cases and will not be considered here.

The concept of configurations generated by sets
of points is central in the study of specific algorithms.
We here use a framework considered for Steiner
triple systems in~\cite[p.~99]{CR} and its references.
Given a configuration with point set $V$ and line set $\mathcal{B}$,
fix $V_0 \subseteq V$ and let
\[
V_{i+1} = V_i \cup \{z : \{x,y,z\} \in \mathcal{B},\ x,y \in V_i\}.
\]
For some finite $j$, $V_{j+1} = V_j$ and then $V_i = V_j$ for all
$i > j$. The set $V_j$ is the \emph{closure} of $V_0$, and
$V_j$ is the point set of a subconfiguration, which is the smallest
point-induced subconfiguration (subsystem, when $(V,\mathcal{B})$ is
a Steiner triple system)
containing $V_0$ and is said to be \emph{generated} by $V_0$.

If $|V_0| = 2$, then $V_0$ generates at most one line, a line in which the
pair of points occurs. Similarly, if $|V_0|=3$ and
$V_0$ is a line, then no further lines are generated.
But many types of configurations can be generated
by $V_0$ when $|V_0|=3$ and $V_0$ is not a line.

\begin{theorem}\label{thm:general}
Let $\mathcal{C}$ be a collection of configurations that
can be generated by $m$ points. Then the time complexity of listing
the configurations in an\/ \STS{v} isomorphic to a configuration in
$\mathcal{C}$  is $O(v^m)$.
\end{theorem}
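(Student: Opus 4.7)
The plan is to enumerate all subsets of at most $m$ points of the input \STS{v}, compute the closure of each as defined in the paragraph preceding the theorem, and record those subconfigurations that are isomorphic to some member of $\mathcal{C}$. Since $\sum_{k=0}^{m}\binom{v}{k}=O(v^m)$, such an enumeration respects the target bound, and by the hypothesis on $\mathcal{C}$ every configuration isomorphic to one in $\mathcal{C}$ will be generated by at least one of these subsets and hence encountered.

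Before the main loop I would preprocess the STS into a $v\times v$ table storing, for each pair of distinct points $\{x,y\}$, the unique third point on the block through $x$ and $y$; this costs $O(v^2)$ time and allows the closure rule $V_{i+1}=V_i\cup\{z:\{x,y,z\}\in\mathcal{B},\,x,y\in V_i\}$ to be applied in constant time per newly added point. Because $\mathcal{C}$ is fixed, the configurations in $\mathcal{C}$ have at most some constant number $M$ of points; as soon as a closure grows past $M$ points the computation is aborted, so each closure terminates in constant time. The subsequent isomorphism test against the finite, fixed family $\mathcal{C}$ is then also constant time.

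The main obstacle is that different generating sets can yield the same subconfiguration, so naive emission would list each occurrence multiple times, and for a true listing each occurrence should appear once. To handle this I would compute a canonical form of every candidate configuration (constant time, as the configurations have bounded size) and maintain a hash table keyed by these canonical forms to suppress repeats. With $O(v^m)$ candidates the total hash-table work is $O(v^m)$, which dominates the $O(v^2)$ preprocessing whenever $m\ge 2$; the cases $m\le 1$ are trivial since a single point or the empty set generates at most itself. Combining these contributions yields the claimed $O(v^m)$ bound.
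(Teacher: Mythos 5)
There is a genuine gap, and it lies at the heart of your algorithm: you compute the \emph{closure} of each $m$-subset inside the ambient \STS{v} and test that closure for isomorphism with a member of $\mathcal{C}$. But in a Steiner triple system every pair of points lies on a block, so the closure of $V_0$ under the rule $V_{i+1}=V_i\cup\{z:\{x,y,z\}\in\mathcal{B},\,x,y\in V_i\}$ is the smallest \emph{subsystem} (sub-\STS{}) containing $V_0$ --- in a system with no proper nontrivial subsystems, that is the entire point set. A configuration $C\in\mathcal{C}$ that occurs in $\mathcal{S}$ and is generated (within $C$, using only $C$'s own lines) by some $m$-set $V_0$ is in general a strict ``partial'' stage of the closure of $V_0$ in $\mathcal{S}$, not the closure itself. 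Concretely, for the Pasch configuration and a generic \STS{v}, the closure of the three generating points is all of $V$; your abort-once-past-$M$-points rule then kills every branch and the algorithm lists nothing. So the claim that ``every configuration isomorphic to one in $\mathcal{C}$ will be \dots encountered'' fails: the candidates your procedure produces are subsystems, not the target configurations.

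The missing idea is to replace the deterministic closure by a \emph{branching, line-by-line extension}: starting from $V_0'$ with an empty line set, repeatedly choose (in all possible ways) a block $B\in\mathcal{B}$ not yet used with $|B\cap V_i'|\ge 2$ and add it, stopping at the (constant) maximum number of lines occurring in $\mathcal{C}$, and testing for isomorphism at each intermediate line count. Since the accumulated point set has bounded size, each step offers only constantly many choices and the tree has constant depth, so the $O(v^m)$ count of $m$-subsets still dominates. This is what the paper's proof does. Your other ingredients (the $O(v^2)$ table for the third point on a block, constant-time isomorphism tests because $\mathcal{C}$ is fixed) are fine, and your canonical-form-plus-hash-table deduplication is a workable, if memory-hungrier, alternative to the paper's parent test (listing a configuration only from its lexicographically smallest generating $m$-subset, together with a within-branch duplicate check); but without the branching extension the algorithm does not find the configurations at all.
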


\begin{proof}
Consider a $(V,\mathcal{B})$ \STS{v}. For each possible $m$-subset
$V'_0 \subseteq V$ and with $\mathcal{B}'_0 = \emptyset$,
the following iterative extension procedure is carried out in all possible ways:
given a point set $V'_i$ and a line set $\mathcal{B}'_i$, let
$V'_{i+1} = V'_{i} \cup B$ and $\mathcal{B}'_{i+1} = \mathcal{B}'_{i} \cup \{B\}$,
where $B \in \mathcal{B}\setminus \mathcal{B}'_i$ and $|B \cap V'_{i}|\geq 2$.
Whenever $i$ equals the number of lines of
a configuration in $\mathcal{C}$, an isomorphism test is carried out.
If the outcome of that test is positive, then the configuration is listed if
two additional tests are passed: (i) the $m$-subset $V'_0$ is the lexicographically
smallest one amongst the $m$-subsets from which the configuration can
be generated, and (ii) the configuration has not already been listed in the
branch of the search tree starting from $V'_0$. This makes sure that each
configuration is listed
exactly once. The number of lines in the configurations in $\mathcal{C}$
sets a bound on the largest value of $i$ to consider.

Using a precomputed data structure, which can be created in $O(v^2)$
(cf. Section~\ref{sect:practice}), the extension can be carried out in
constant time.
As the isomorphism test can also be carried out in constant time, the
time complexity of the problem is bounded from above by the number of
$m$-subsets of a $v$-set and is therefore $O(v^m)$.
\end{proof}

Note that the core of the algorithm in the proof is essentially about
canonical augmentation~\cite{M0}---see also~\cite[Sect.\ 4.2.3]{KO}---which
consists of (i) a parent test and (ii) an isomorphism test.
The extension procedures in and before the proof of Theorem~\ref{thm:general} are
closely connected to the core of Miller's algorithm~\cite{M} for
computing a canonical form of an \STS{v} in $O(v^{\log v + O(1)})$. Further
related studies include~\cite{CCS,S}.
Also note the similarity between the extension procedure and the algorithm
in~\cite{BP}.

For listing algorithms, it is now a matter of determining the size of
point sets needed to generate configurations.

\subsection{Small Configurations}

An algorithm that solves Problem~$P(\mathcal{C})$ gives
an upper bound on the time complexity. For some configurations,
including the smallest nontrivial case of \emph{Pasch} configurations,
it is possible to prove that the upper bound given by Theorem~\ref{thm:general}
is actually exact. The Pasch configuration is depicted
in Figure~\ref{sf:pasch}; one possible set of generating
points is here and in later pictures shown with bold circles.
The labels in all pictures refer to the naming of variables in
Section~\ref{sect:practice}.

\begin{figure}[htbp]
\vspace{5mm}
\centering
\begin{subfigure}[b]{0.4\textwidth}
\centering
\begin{tikzpicture}
\coordinate (d) at (0:0);
\coordinate (f) at (30:1);
\coordinate (a) at (90:2);
\coordinate (b) at (150:1);
\coordinate (e) at (210:2);
\coordinate (c) at (330:2);

\draw (a)--(b)--(e);
\draw (a)--(f)--(c);
\draw (b)--(d)--(c);
\draw (e)--(d)--(f);

\foreach \x in {a,b,f} \node[draw,fill=white,ultra thick,circle,text width=width("b"),text centered] at (\x) {$\mypoint{\x}$};
\foreach \x in {c,d,e} \node[draw,fill=white,dotted,circle,text width=width("b"),text centered] at (\x) {$\mypoint{\x}$};
\end{tikzpicture}
\caption{Pasch}\label{sf:pasch}
\end{subfigure}
\begin{subfigure}[b]{0.4\textwidth}
\centering
\begin{tikzpicture}
\coordinate (e) at (30:1);
\coordinate (g) at (90:2);
\coordinate (b) at (150:1);
\coordinate (a) at (210:2);
\coordinate (f) at (330:2);
\coordinate (d) at ($(b)!0.5!(e)$);
\coordinate (c) at ($(a)!0.5!(f)$);

\draw (a)--(b)--(g);
\draw (a)--(c)--(f);
\draw (b)--(d)--(e);
\draw (c)--(d)--(g);
\draw (f)--(e)--(g);

\foreach \x in {a,c,e} \node[draw,fill=white,ultra thick,circle,text width=width("b"),text centered] at (\x) {$\mypoint{\x}$};
\foreach \x in {b,d,f,g} \node[draw,fill=white,dotted,circle,text width=width("b"),text centered] at (\x) {$\mypoint{\x}$};
\end{tikzpicture}
\caption{Mitre}\label{sf:mitre}
\end{subfigure}
\caption{The Pasch and mitre configurations}\label{fig:pasch}
\end{figure}
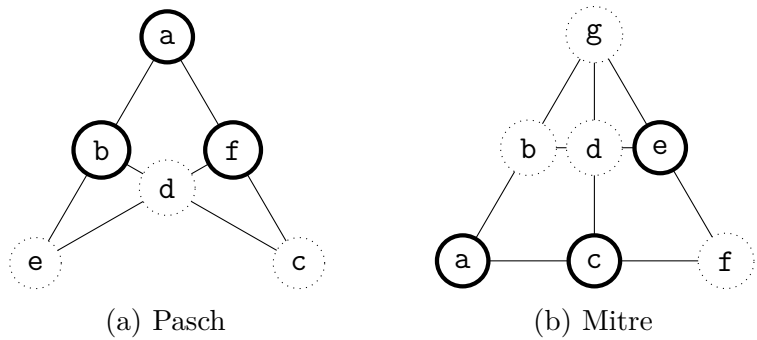

\begin{theorem}\label{thm:pasch}
The time complexity of listing the Pasch configurations in an\/
\STS{v} is $\Theta(v^3)$.
\end{theorem}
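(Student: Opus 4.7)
The upper bound $O(v^3)$ follows immediately from Theorem~\ref{thm:general}: as indicated by the bold circles in Figure~\ref{sf:pasch}, a Pasch configuration is generated by the three non-collinear points $\mypoint{a}, \mypoint{b}, \mypoint{f}$, since $\mypoint{e}$, $\mypoint{c}$, and $\mypoint{d}$ are each forced as the third point of a Pasch line whose other two points are already present. Thus only $m = 3$ generating points are needed.

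For the matching $\Omega(v^3)$ lower bound, the plan is to exhibit an infinite family of \STSS{v} containing $\Omega(v^3)$ Pasch configurations; any listing algorithm must then perform at least that many output operations in the worst case. I would use the projective Steiner triple systems $\mathrm{PG}(n,2)$, of order $v = 2^{n+1} - 1$. The crucial combinatorial input is the classical fact that in $\mathrm{PG}(n,2)$ every three non-collinear points generate a Fano subplane; counting non-collinear triples and dividing by the number $28$ of such triples in a Fano plane shows that $\mathrm{PG}(n,2)$ contains $\Theta(v^3)$ Fano subplanes.

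To convert this into a count of Paschs, I would argue that (i) each Fano subplane contributes exactly seven Paschs, one for each point, obtained by removing that point together with the three lines through it; and (ii) each Pasch in $\mathrm{PG}(n,2)$ lies in a \emph{unique} Fano subplane, because any three of its non-collinear points already generate a Fano subplane, and the remaining three Pasch points then arise as forced closures along the four Pasch lines, so they too lie in that Fano subplane. Combined, these give $\Theta(v^3)$ Paschs in $\mathrm{PG}(n,2)$, completing the lower bound. The main obstacle I anticipate is the uniqueness assertion in (ii): it hinges on the special property of $\mathrm{PG}(n,2)$ that the closure of any three non-collinear points is exactly a Fano subplane rather than some larger subsystem, a feature that does not hold for general Steiner triple systems.
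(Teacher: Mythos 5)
Your proof is correct and has the same two-part structure as the paper's: the $O(v^3)$ upper bound via Theorem~\ref{thm:general} applied to the three generating points of Figure~\ref{sf:pasch}, and a matching lower bound via \STSS{v} containing $\Theta(v^3)$ Pasch configurations. The only real difference is how the lower-bound family is obtained. The paper simply cites Stinson and Wei~\cite{SW} for the existence of \STSS{v} with $v(v-1)(v-3)/24$ Pasch configurations, whereas you reconstruct such a family from scratch in $\mathrm{PG}(n,2)$, $v=2^{n+1}-1$. Your reconstruction checks out: there are $v(v-1)(v-2)/6 - v(v-1)/6 = v(v-1)(v-3)/6$ non-collinear triples, each spanning a Fano subplane that contains $28$ such triples, so there are $v(v-1)(v-3)/168$ Fano subplanes; each yields $7$ Paschs (delete a point and its three lines), and each Pasch lies in a unique Fano subplane because three of its non-collinear points already span one and the remaining points lie in its closure. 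This gives exactly $7\cdot v(v-1)(v-3)/168 = v(v-1)(v-3)/24$ Paschs, matching the cited count. What your version buys is a self-contained argument in place of a citation; the uniqueness step you flagged as the main obstacle is indeed the one point needing care, and your justification of it (closure of three non-collinear points in $\mathrm{PG}(n,2)$ is exactly a Fano subplane) is sound. The restriction to orders $v=2^{n+1}-1$ is harmless, since an infinite family of inputs is all that an $\Omega(v^3)$ worst-case bound requires, and the extremal systems in~\cite{SW} are of these orders anyway.
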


\begin{proof}
The Pasch configuration can be generated by the 3 points
indicated in Figure~\ref{sf:pasch}. Then
combine the upper bound given by Theorem~\ref{thm:general} and the fact that
there are \STSS{v} with $v(v-1)(v-3)/24$ Pasch configurations~\cite{SW}.
\end{proof}

\begin{theorem}\label{thm:4}
The time complexity of counting the number of occurrences of a
given\/ $4$-line variable configuration in an\/ \STS{v} is $O(v^3)$.
\end{theorem}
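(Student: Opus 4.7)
The plan is to invoke the reduction alluded to in the introduction---the count of any $n$-line configuration in a Steiner triple system is expressible as a function of the counts of full $m$-line configurations with $m \leq n$. For $n = 4$, a short incidence count shows that the \emph{only} full configuration with $m \leq 4$ is the Pasch, after which Theorem~\ref{thm:pasch} supplies the bound.

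The incidence count goes as follows. A full $m$-line configuration has $3m$ point-incidences distributed over points of degree at least $2$, and in an \STS{v} any two lines share at most one point. Writing $p_i$ for the number of points of degree $i$, one has $\sum_{i\ge 2} i\, p_i = 3m$ with $i \le m$. For $m = 1$ no full configuration exists. For $m = 2$ two distinct lines cover at least $5$ points of which at most one has degree~$\geq 2$. For $m = 3$ the only integer solutions are $(p_2,p_3) = (0,3)$ and $(3,1)$, both of which force two lines to share more than one point, again impossible. For $m = 4$ the only admissible solution is $p_2 = 6$, and the corresponding $4$-line incidence pattern---four lines pairwise meeting in six distinct degree-$2$ points---is precisely the Pasch.

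Consequently, the count $N_D(\mathcal{S})$ of any $4$-line configuration $D$ in an \STS{v} $\mathcal{S}$ has the form
\[
N_D(\mathcal{S}) = p_D(v) + c_D\, N_{\mathrm{Pasch}}(\mathcal{S}),
\]
where $p_D$ is a polynomial in $v$ and $c_D$ is a rational constant determined by $D$. Since $D$ is variable, $N_D(\mathcal{S})$ is not a function of $v$ alone, which forces $c_D \neq 0$. The polynomial $p_D(v)$ is evaluated in $O(1)$ time, and Theorem~\ref{thm:pasch} supplies $N_{\mathrm{Pasch}}(\mathcal{S})$ in $O(v^3)$ time, yielding the claimed bound.

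The main obstacle is making the reduction formula concrete---extracting $p_D$ and $c_D$ from the combinatorics of $D$. This amounts to a finite inclusion--exclusion over embeddings of the subconfigurations of $D$, which is tedious but mechanical given the bounded size of $D$.
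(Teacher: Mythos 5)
Your proof is correct, and it reaches the paper's bound by the same overall strategy (reduce every variable $4$-line count to the Pasch count, then invoke Theorem~\ref{thm:pasch}), but it justifies the reduction differently. The paper's proof of Theorem~\ref{thm:4} is a direct appeal to the explicit formulas of \cite{GGM}, which express the count of each of the 11 variable $4$-line configurations in terms of the Pasch count. You instead derive the reduction from the general basis result of \cite{HPWY} (Theorem~\ref{thm:number}) together with a self-contained incidence-count classification showing that the Pasch is the unique full configuration with at most $4$ lines---which is exactly the route the paper itself takes for the $5$-line case in Theorem~\ref{thm:5}, and the classification you prove by hand is what the paper reads off from Table~\ref{tab:full}. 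What your route buys is independence from the specific formulas of \cite{GGM}; what it costs is the case analysis, and your $m=4$ step is compressed: you should dispose explicitly of the solutions of $2p_2+3p_3+4p_4=12$ with $p_3>0$ or $p_4>0$ (each is killed by the two-lines-share-at-most-one-point constraint, e.g.\ a degree-$3$ point forces at least $7$ points among its three lines) before concluding $p_2=6$. Two harmless inaccuracies: Theorem~\ref{thm:number} only guarantees that the coefficient $c_D$ is a polynomial in $v$, not a rational constant (this does not affect the $O(v^3)$ bound, since evaluating a fixed polynomial is $O(1)$), and the observation that $c_D\neq 0$ for variable $D$ is unnecessary for an upper bound. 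Finally, your closing worry about ``extracting $p_D$ and $c_D$'' is not a real obstacle: the configuration $D$ is fixed, so these data are precomputed constants of the algorithm rather than part of the computation on the input \STS{v}.
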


\begin{proof}
For any of the
11 variable $4$-line configurations, the number of occurrences can be derived
from the number of occurrences of any single one of them~\cite{GGM}. In particular,
using the number of Pasch configurations, the result follows from
Theorem~\ref{thm:pasch}.
\end{proof}

The case of 4-line configurations provides
further examples with different time complexity
for counting and listing. Namely, for all but the Pasch configuration,
4 to 8 points are needed for generation, which gives listing algorithms with
time complexity $O(v^4)$ to $O(v^8)$. In fact, the maximum number of occurrences of
configurations~\cite{GGM} shows that the time complexity of listing
is $\Theta(v^4)$ to $\Theta(v^8)$. For example, the configuration that
requires 8 points for generation consists of four disjoint lines.

The result in Theorem~\ref{thm:4} may be extended by considering $n$-line
configurations for any fixed $n>4$ in an analogous way. Formulas for
the relationship between the numbers of occurrences of variable $5$-line
and $6$-line configurations can be found in~\cite{DMGG} and~\cite{F},
respectively. Actually, since we do not need exact formulas in our
study of complexity, the following general result will suffice.

\begin{theorem}[\cite{HPWY}]\label{thm:number}
The number of occurrences of any variable $n$-line configuration in
an\/ \STS{v} is a constant (as a polynomial in $v$) plus a linear combination
(with coefficients that are polynomials in $v$) of the numbers of occurrences of
the full $m$-line configurations with $m \leq n$.
\end{theorem}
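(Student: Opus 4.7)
The plan is to induct on the pair $(n, |V_C|)$ lexicographically, working with the number $N_C^\ast$ of labeled embeddings of $C$ into $\mathcal{S}$, namely injective maps $\phi : V_C \to V$ with $\phi(\mathcal{L}_C) \subseteq \mathcal{B}$; the statement for the unlabeled count $N_C$ follows by dividing by $|\mathrm{Aut}(C)|$. The base case collects all full configurations: if $C$ is full, then $N_C^\ast = 1 \cdot N_C^\ast + 0$ is trivially of the required form.

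For the inductive step, suppose $C$ is variable and not full. Pick a pendant point $p \in V_C$ and let $L = \{p,q,r\}$ be the unique line of $C$ containing $p$. Set $C^- = C \setminus \{L\}$, a configuration with $n-1$ lines. Each labeled embedding of $C$ is an embedding of $C^-$ extended to $L$ and its pendant points. In the simplest subcase, where $p$ is the only pendant point of $L$, the STS property forces $\phi(L)$ to be the unique block through $\phi(q)$ and $\phi(r)$, and the extension is valid iff that block's third point lies outside $\phi(V_{C^-})$. Inclusion--exclusion on the identity of that third point yields
\begin{equation*}
N_C^\ast \;=\; N_{C^-}^\ast \;-\; \sum_{x \in V_{C^-} \setminus \{q,r\}} N_{C^- \cup \{\{q,r,x\}\}}^\ast,
\end{equation*}
with the convention $N^\ast = 0$ when the augmented line set is inconsistent. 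When two or three points of $L$ are pendant, one instead counts blocks through $\phi(q)$ (respectively, blocks in $\mathcal{B}$) whose remaining points avoid $\phi(V_{C^-})$; expanding via inclusion--exclusion (using, for instance, $v(v-1)/6 - |V_{C^-}|(v-1)/2 + \binom{|V_{C^-}|}{2}$ as the ``free'' count of blocks disjoint from an embedded set, minus a correction for accidental incidences) and summing over good $\phi$ produces an analogous identity: a polynomial in $v$ times $N_{C^-}^\ast$, plus correction terms $N_D^\ast$ for configurations $D$ obtained from $C^-$ by adjoining one or two new lines through its existing points.

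Every configuration appearing on the right is strictly smaller than $C$ in the order $(n, |V_C|)$: the reduced $C^-$ has $n-1$ lines, and each correction $D$ has $n$ lines but $|V_D| = |V_{C^-}| < |V_C|$. By the inductive hypothesis, each such $N^\ast$ is a polynomial in $v$ plus a linear combination of $N_F^\ast$ over full $F$ with at most $n$ lines; substituting and collecting terms (which remain polynomials in $v$) gives the desired form for $N_C^\ast$, and dividing by $|\mathrm{Aut}(C)|$ yields the statement for $N_C$. The main obstacle I anticipate is the bookkeeping in the multi-pendant subcases: correctly converting the counts of blocks meeting (or avoiding) an embedded set into sums over augmentations of $C^-$, and verifying in each branch that every such augmentation has strictly fewer points than $C$ so that the induction parameter really decreases.
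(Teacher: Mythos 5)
The paper offers no proof of this theorem: it is imported wholesale from \cite{HPWY}, as the bracketed citation in the theorem header indicates, so there is nothing in the paper itself to compare your argument against. On its own merits your proof is sound, and it is in essence the standard degree-one-point elimination that underlies the cited result (and the earlier four- and five-line formulas of \cite{GGM} and \cite{DMGG}): a configuration that is not full has a point on exactly one line, deleting that line yields $C^-$ with $n-1$ lines, and the labeled count $N_C^\ast$ equals a polynomial-in-$v$ multiple of $N_{C^-}^\ast$ corrected by counts of configurations obtained from $C^-$ by adjoining lines on its existing point set; these corrections have $n$ lines but strictly fewer points, so the lexicographic induction on the pair (number of lines, number of points) is well founded and bottoms out at the full configurations. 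Two small points of bookkeeping beyond what you already flag: in the single-pendant case the decomposition over the identity of the third point is a disjoint partition rather than genuine inclusion--exclusion (the formula you write is nevertheless correct), and in the multi-pendant cases some of the ``adjoined'' triples $\{r,x,y\}$ or $\{x,y,z\}$ may already be lines of $C^-$, in which case the corresponding correction term is a constant multiple of $N_{C^-}^\ast$ itself rather than the count of a new configuration --- this does not disturb the induction, but it must be separated out when collecting terms. With those caveats the argument is complete and matches the route taken in the cited source.
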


The only full $5$-line configuration is
the \emph{mitre} configuration (Figure~\ref{sf:mitre}).

\begin{theorem}\label{thm:5}
The time complexity of counting the number of occurrences of a
given\/ $5$-line variable configuration in an\/ \STS{v} is $O(v^3)$.
\end{theorem}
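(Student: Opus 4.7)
The plan is to combine Theorem~\ref{thm:number} with Theorems~\ref{thm:pasch} and~\ref{thm:general}: by Theorem~\ref{thm:number}, the count of the given variable $5$-line configuration equals a polynomial in $v$ plus a linear combination, with polynomial-in-$v$ coefficients, of the numbers of occurrences of the full $m$-line configurations with $m \leq 5$, so it suffices to bound the time for counting each such full configuration by $O(v^3)$.

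First I would enumerate the relevant full configurations. For $m \leq 2$ none exist. For $m = 3$ none exist either: fullness forces every point to be on at least two of the three lines, so by double counting there are at most $\lfloor 9/2 \rfloor = 4$ points; the only feasible multiplicity pattern then puts some point on all three lines, and the remaining six points on those lines are distinct in an STS, contradicting the bound of four. For $m = 4$, a similar short case check leaves the Pasch as the unique full configuration, and the paper has already stated that the mitre is the only full $5$-line configuration. Thus Theorem~\ref{thm:number} expresses our count using only the numbers of Pasch configurations and of mitres.

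Second, I would bound each of these two counts. The Pasch count is $O(v^3)$ by Theorem~\ref{thm:pasch}. For the mitre, Theorem~\ref{thm:general} applies with $m=3$, using the generating set $\{\mypoint{a},\mypoint{c},\mypoint{e}\}$ highlighted in Figure~\ref{sf:mitre}; one verifies generation by a short closure computation (the pair $\mypoint{a},\mypoint{c}$ produces $\mypoint{f}$, then $\mypoint{e},\mypoint{f}$ produces $\mypoint{g}$, and finally $\mypoint{a},\mypoint{g}$ and $\mypoint{c},\mypoint{g}$ produce $\mypoint{b}$ and $\mypoint{d}$). Hence listing, and a fortiori counting, mitres takes $O(v^3)$. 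Substituting these bounds and noting that evaluating the polynomial coefficients from Theorem~\ref{thm:number} requires only constantly many arithmetic operations gives the claimed $O(v^3)$ total.

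The only step requiring real effort is the enumeration of full configurations with at most five lines; once the list is pinned down to just the Pasch and the mitre, the bound follows directly from Theorems~\ref{thm:number}, \ref{thm:pasch}, and~\ref{thm:general}.
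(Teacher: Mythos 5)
Your proposal is correct and follows essentially the same route as the paper: reduce via Theorem~\ref{thm:number} to the full $m$-line configurations with $m \le 5$, which are exactly the Pasch and the mitre, then count the Pasch in $O(v^3)$ (the paper cites Theorem~\ref{thm:4}, you cite Theorem~\ref{thm:pasch} directly, which is equivalent) and the mitre in $O(v^3)$ via Theorem~\ref{thm:general} with the $3$-point generating set of Figure~\ref{sf:mitre}. Your extra details---the explicit closure computation for the mitre and the enumeration argument ruling out full configurations with fewer lines---are correct but are simply steps the paper takes for granted.
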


\begin{proof}
The full $m$-line configurations with $m \leq 5$ are the Pasch and
the mitre configuration.
The mitre configuration can be generated by 3 points---as indicated in
Figure~\ref{sf:mitre}---and
therefore the number of occurrences can be obtained in $O(v^3)$
by Theorem~\ref{thm:general}.
As also the Pasch configurations can be counted in $O(v^3)$ by Theorem~\ref{thm:4},
the result now follows from Theorem~\ref{thm:number}.
\end{proof}

With an increasing number of lines, extending the results in
Theorems~\ref{thm:4} and~\ref{thm:5} is rather straightforward but
becomes more and more laborious. Moreover, for different configurations
we will get different upper bounds on the time complexity,
so the results cannot be stated
in compact form. For example, for $6$-line and $7$-line configurations
we get the following general result.

\begin{theorem}\label{thm:6}
The time complexity of counting the number of occurrences of a
given\/ $6$-line\/ or $7$-line variable configuration in an\/ \STS{v} is $O(v^4)$.
\end{theorem}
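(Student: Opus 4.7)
The plan is to combine Theorem~\ref{thm:number} with a case-by-case enumeration of the full $m$-line configurations for $m\in\{6,7\}$, verifying that each can be generated by at most $4$ points, and then invoking Theorem~\ref{thm:general}.

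First, Theorem~\ref{thm:number} reduces the task: the number of occurrences of the given variable $n$-line configuration ($n\in\{6,7\}$) in an\/ \STS{v} equals a polynomial in $v$ (evaluable in constant time) plus a linear combination, with polynomial-in-$v$ coefficients depending only on the target configuration, of the numbers of occurrences of the full $m$-line configurations with $m\leq n$. It therefore suffices to show that each full $m$-line configuration with $m\leq 7$ can be counted in $O(v^4)$, since only a bounded number of them appear.

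For $m\leq 5$ the full configurations are exactly the Pasch and the mitre, which are counted in $O(v^3)$ by Theorems~\ref{thm:pasch} and~\ref{thm:5}. For $m\in\{6,7\}$ the set of full configurations is finite---since each point lies on at least two lines, a full $m$-line configuration has at most $\lfloor 3m/2\rfloor$ points, i.e., at most $9$ or $10$---and I would enumerate them and, for each one, exhibit an explicit set of $4$ points whose closure under the ``third-point completion'' rule from Section~\ref{sect:count} recovers the entire configuration. Theorem~\ref{thm:general} then gives an $O(v^4)$ count per configuration, and summing a bounded number of such terms keeps the total bound at $O(v^4)$.

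The main obstacle is this last verification. A priori, a full configuration on up to $10$ points could conceivably require more than $4$ generating points, which would weaken the bound. A natural strategy is to pick four points that are pairwise non-collinear in the configuration and together incident to enough lines so that each closure step forces several new points, cascading until the whole configuration is recovered; making such a choice work uniformly across every case in the enumeration is where the routine-but-tedious effort lies, and it is the step most likely to require careful bookkeeping or direct inspection of each individual configuration.
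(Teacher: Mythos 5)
Your proposal follows essentially the same route as the paper: reduce via Theorem~\ref{thm:number} to counting the full $m$-line configurations with $m\leq 7$, then check that each of these has a generating set of size at most $4$ and apply Theorem~\ref{thm:general}. The verification you correctly flag as the crux is exactly what the paper supplies by exhibiting generating sets for the five full $6$-line configurations in Figure~\ref{fig:full} and by the computational census in Table~\ref{tab:full} showing $N_i=0$ for $i\geq 5$ when $n=7$ (and your caution is warranted, since this step genuinely fails for $n=8$, where full configurations needing $5$ and $6$ generating points exist).
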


\begin{proof}
There are 5 full $6$-line configurations, which
are depicted in Figure~\ref{fig:full} with generating sets indicated. Consequently,
by Theorem~\ref{thm:number}, the time complexity for counting the number of
occurrences of a variable $6$-line configuration is $O(v^4)$.
For variable $7$-line configurations, a similar argument applies as all full
$7$-line configurations have generating sets of size at most 4 (by
Table~\ref{tab:full}, to be discussed later).
\end{proof}

Clearly Theorem~\ref{thm:6} is not tight in the sense that for some of the variable
$6$-line and $7$-line configurations, the time complexity
of counting is $O(v^3)$.

\begin{figure}[htbp]
\centering
\begin{subfigure}[b]{0.3\textwidth}
\centering
\begin{tikzpicture}
\coordinate (f) at (0:0);
\coordinate (d) at (30:1);
\coordinate (g) at (90:2);
\coordinate (a) at (150:1);
\coordinate (c) at (210:2);
\coordinate (b) at (270:1);
\coordinate (e) at (330:2);

\draw (c)--(a)--(g);
\draw (a)--(f)--(e);
\draw (c)--(b)--(e);
\draw (b)--(f)--(g);
\draw (c)--(f)--(d);
\draw (e)--(d)--(g);

\foreach \x in {c,e,f} \node[draw,fill=white,ultra thick,circle,text width=width("b"),text centered] at (\x) {$\mypoint{\x}$};
\foreach \x in {a,b,d,g} \node[draw,fill=white,dotted,circle,text width=width("b"),text centered] at (\x) {$\mypoint{\x}$};
\end{tikzpicture}
\caption{Fano--line}\label{sf:fanoline}
\end{subfigure}
\begin{subfigure}[b]{0.3\textwidth}
\centering
\begin{tikzpicture}
\coordinate (a) at (1,3);
\coordinate (b) at (2,3);
\coordinate (d) at (3,3);
\coordinate (e) at (1,2);
\coordinate (c) at (2,2);
\coordinate (h) at (3,2);
\coordinate (g) at (1,1);
\coordinate (f) at (2,1);
\coordinate (i) at (3,1);

\draw (a)--(b)--(d);
\draw (a)--(e)--(g);
\draw (b)--(c)--(f);
\draw (e)--(c)--(h);
\draw (d)--(h)--(i);
\draw (g)--(f)--(i);

\foreach \x in {a,b,c,e} \node[draw,fill=white,ultra thick,circle,text width=width("b"),text centered] at (\x) {$\mypoint{\x}$};
\foreach \x in {d,f,g,h,i} \node[draw,fill=white,dotted,circle,text width=width("b"),text centered] at (\x) {$\mypoint{\x}$};
\end{tikzpicture}
\caption{Grid}\label{sf:grid}
\end{subfigure}
\begin{subfigure}[b]{0.3\textwidth}
\centering
\begin{tikzpicture}
\coordinate (f) at (3,3);
\coordinate (h) at (3,2);
\coordinate (e) at (1,1);
\coordinate (d) at (5,1);
\coordinate (a) at (3,0);
\coordinate (b) at ($(e)!0.35!(a)$);
\coordinate (g) at ($(d)!0.35!(a)$);
\coordinate (c) at (intersection of e--h and b--f);
\coordinate (i) at (intersection of h--d and f--g);

\draw (a)--(b)--(e);
\draw (a)--(g)--(d);
\draw (b)--(c)--(f);
\draw (e)--(c)--(h);
\draw (d)--(i)--(h);
\draw (f)--(i)--(g);

\foreach \x in {a,b,d,f} \node[draw,fill=white,ultra thick,circle,text width=width("b"),text centered] at (\x) {$\mypoint{\x}$};
\foreach \x in {c,e,g,h,i} \node[draw,fill=white,dotted,circle,text width=width("b"),text centered] at (\x) {$\mypoint{\x}$};
\end{tikzpicture}
\caption{Prism}\label{sf:prism}
\end{subfigure}
\begin{subfigure}[b]{0.4\textwidth}
\centering
\begin{tikzpicture}
\coordinate (b) at (1,3);
\coordinate (a) at (3,3);
\coordinate (c) at (5,3);
\coordinate (d) at (2,2);
\coordinate (g) at (4,2);
\coordinate (e) at (1,1);
\coordinate (h) at (3,1);
\coordinate (f) at (5,1);

\draw (b)--(a)--(c);
\draw (e)--(d)--(a);
\draw (a)--(g)--(f);
\draw (b)--(d)--(h);
\draw (c)--(g)--(h);
\draw (e)--(h)--(f);

\foreach \x in {b,c,d} \node[draw,fill=white,ultra thick,circle,text width=width("b"),text centered] at (\x) {$\mypoint{\x}$};
\foreach \x in {a,e,f,g,h} \node[draw,fill=white,dotted,circle,text width=width("b"),text centered] at (\x) {$\mypoint{\x}$};
\end{tikzpicture}
\caption{Hexagon}\label{sf:hexagon}
\end{subfigure}
\begin{subfigure}[b]{0.4\textwidth}
\centering
\begin{tikzpicture}
\coordinate (g) at (3,4);
\coordinate (a) at (1,1);
\coordinate (c) at (5,1);
\coordinate (b) at ($(a)!0.7!(c)$);
\coordinate (h) at ($(c)!0.5!(g)$);
\coordinate (f) at ($(a)!0.5!(g)$);
\coordinate (d) at ($(b)!0.4!(f)$);
\coordinate (e) at (intersection of f--h and d--g);

\draw (a)--(b)--(c);
\draw (a)--(f)--(g);
\draw (b)--(d)--(f);
\draw (c)--(h)--(g);
\draw (d)--(e)--(g);
\draw (f)--(e)--(h);

\foreach \x in {e,f,g} \node[draw,fill=white,ultra thick,circle,text width=width("b"),text centered] at (\x) {$\mypoint{\x}$};
\foreach \x in {a,b,c,d,h} \node[draw,fill=white,dotted,circle,text width=width("b"),text centered] at (\x) {$\mypoint{\x}$};
\end{tikzpicture}
\caption{Crown}\label{sf:crown}
\end{subfigure}
\caption{The full $6$-line configurations}\label{fig:full}
\end{figure}
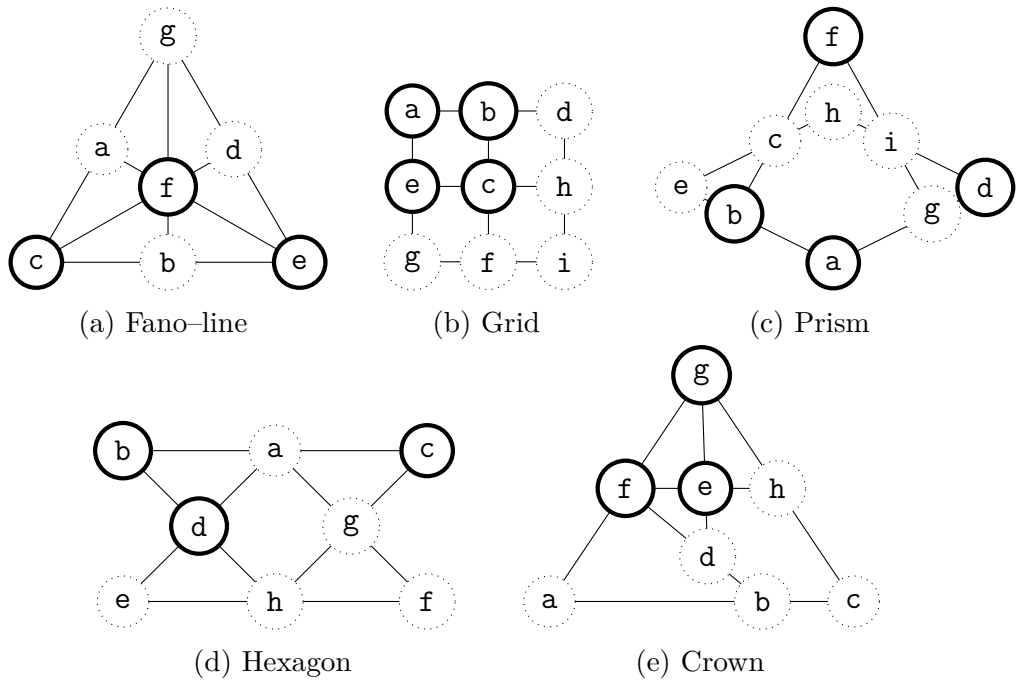

For a fixed $n \geq 7$, there are too many full $n$-line configurations
to depict all of them here. However, computationally one can easily get rather
extensive results. The number $N$ of isomorphism classes of
full $n$-line configurations for $n \leq 12$ have been obtained earlier
in~\cite{FGG}. We extend that work to $n \leq 13$ in Table~\ref{tab:full}
and, for all those parameters, tabulate the distribution $N_i$
based on the size $i$ of the smallest generating set.
Also, the distribution of automorphism group sizes, $|\operatorname{Aut}|$, is shown.

\begin{table}[htbp]
\caption{Sizes of generating sets of full $n$-line configurations}\label{tab:full}
\vspace*{-5mm}
\begin{center}
\footnotesize
\setlength{\tabcolsep}{3pt}
\begin{tabularx}{\textwidth}{rrXrrrrrrr}
\toprule
$n$ & $N$ & $|\operatorname{Aut}|$ & $N_3$ & $N_4$ & $N_5$ & $N_6$ & $N_7$ & $N_8$ & $N_9$ \\
\midrule
4 & 1 & $ { 24 }^{ 1 } \allowbreak $ & 1 & 0 & 0 & 0 & 0 & 0 & 0 \\
5 & 1 & $ { 12 }^{ 1 } \allowbreak $ & 1 & 0 & 0 & 0 & 0 & 0 & 0 \\
6 & 5 & $ { 2 }^{ 1 } \allowbreak { 12 }^{ 2 } \allowbreak { 24 }^{ 1 } \allowbreak { 72 }^{ 1 } \allowbreak $ & 3 & 2 & 0 & 0 & 0 & 0 & 0 \\
7 & 19 & $ { 1 }^{ 3 } \allowbreak { 2 }^{ 5 } \allowbreak { 4 }^{ 3 } \allowbreak { 6 }^{ 5 } \allowbreak { 12 }^{ 2 } \allowbreak { 168 }^{ 1 } \allowbreak $ & 13 & 6 & 0 & 0 & 0 & 0 & 0 \\
8 & 153 & $ { 1 }^{ 58 } \allowbreak { 2 }^{ 50 } \allowbreak { 3 }^{ 1 } \allowbreak { 4 }^{ 22 } \allowbreak { 6 }^{ 2 } \allowbreak { 8 }^{ 6 } \allowbreak { 12 }^{ 3 } \allowbreak { 16 }^{ 5 } \allowbreak { 24 }^{ 1 } \allowbreak { 32 }^{ 1 } \allowbreak { 48 }^{ 2 } \allowbreak { 64 }^{ 1 } \allowbreak { 1152 }^{ 1 } \allowbreak $ & 98 & 48 & 6 & 1 & 0 & 0 & 0 \\
9 & 1615 & $ { 1 }^{ 1156 } \allowbreak { 2 }^{ 341 } \allowbreak { 3 }^{ 5 } \allowbreak { 4 }^{ 55 } \allowbreak { 6 }^{ 15 } \allowbreak { 8 }^{ 19 } \allowbreak { 9 }^{ 1 } \allowbreak { 12 }^{ 10 } \allowbreak { 16 }^{ 4 } \allowbreak { 18 }^{ 1 } \allowbreak { 24 }^{ 2 } \allowbreak { 32 }^{ 2 } \allowbreak { 36 }^{ 1 } \allowbreak { 48 }^{ 1 } \allowbreak { 108 }^{ 1 } \allowbreak { 288 }^{ 1 } \allowbreak $ & 1081 & 492 & 41 & 1 & 0 & 0 & 0 \\
10 & 25180 & $ { 1 }^{ 21970 } \allowbreak { 2 }^{ 2533 } \allowbreak { 3 }^{ 24 } \allowbreak { 4 }^{ 421 } \allowbreak { 5 }^{ 1 } \allowbreak { 6 }^{ 44 } \allowbreak { 8 }^{ 102 } \allowbreak { 10 }^{ 2 } \allowbreak { 12 }^{ 16 } \allowbreak { 16 }^{ 25 } \allowbreak { 18 }^{ 1 } \allowbreak { 20 }^{ 4 } \allowbreak { 24 }^{ 11 } \allowbreak { 32 }^{ 9 } \allowbreak { 36 }^{ 2 } \allowbreak { 48 }^{ 4 } \allowbreak { 64 }^{ 1 } \allowbreak { 96 }^{ 1 } \allowbreak { 120 }^{ 2 } \allowbreak { 128 }^{ 1 } \allowbreak { 288 }^{ 4 } \allowbreak { 576 }^{ 1 } \allowbreak { 1728 }^{ 1 } \allowbreak $ & 17038 & 7426 & 688 & 26 & 2 & 0 & 0 \\
11 & 479238 & $ { 1 }^{ 454542 } \allowbreak { 2 }^{ 21449 } \allowbreak { 3 }^{ 41 } \allowbreak { 4 }^{ 2494 } \allowbreak { 6 }^{ 67 } \allowbreak { 8 }^{ 456 } \allowbreak { 11 }^{ 1 } \allowbreak { 12 }^{ 40 } \allowbreak { 16 }^{ 72 } \allowbreak { 24 }^{ 27 } \allowbreak { 32 }^{ 14 } \allowbreak { 36 }^{ 1 } \allowbreak { 48 }^{ 12 } \allowbreak { 64 }^{ 2 } \allowbreak { 72 }^{ 2 } \allowbreak { 96 }^{ 6 } \allowbreak { 144 }^{ 7 } \allowbreak { 288 }^{ 3 } \allowbreak { 864 }^{ 1 } \allowbreak { 4032 }^{ 1 } \allowbreak $ & 323591 & 142075 & 13193 & 371 & 8 & 0 & 0 \\
12 & 10695820 & $ { 1 }^{ 10431210 } \allowbreak { 2 }^{ 236254 } \allowbreak { 3 }^{ 229 } \allowbreak { 4 }^{ 23407 } \allowbreak { 6 }^{ 267 } \allowbreak { 8 }^{ 3234 } \allowbreak { 12 }^{ 224 } \allowbreak { 16 }^{ 510 } \allowbreak { 18 }^{ 3 } \allowbreak { 24 }^{ 164 } \allowbreak { 32 }^{ 100 } \allowbreak { 36 }^{ 7 } \allowbreak { 48 }^{ 97 } \allowbreak { 64 }^{ 20 } \allowbreak { 72 }^{ 11 } \allowbreak { 96 }^{ 30 } \allowbreak { 128 }^{ 7 } \allowbreak { 144 }^{ 9 } \allowbreak { 192 }^{ 8 } \allowbreak { 288 }^{ 8 } \allowbreak { 384 }^{ 7 } \allowbreak { 432 }^{ 1 } \allowbreak { 576 }^{ 1 } \allowbreak { 768 }^{ 2 } \allowbreak { 864 }^{ 2 } \allowbreak { 1152 }^{ 3 } \allowbreak { 1536 }^{ 1 } \allowbreak { 1728 }^{ 1 } \allowbreak { 2016 }^{ 1 } \allowbreak { 10368 }^{ 1 } \allowbreak { 82944 }^{ 1 } \allowbreak $ & 7087335 & 3289199 & 308659 & 10447 & 170 & 9 & 1 \\
13 & 270939475 & $ { 1 }^{ 267284483 } \allowbreak { 2 }^{ 3338204 } \allowbreak { 3 }^{ 1034 } \allowbreak { 4 }^{ 285033 } \allowbreak { 6 }^{ 1388 } \allowbreak { 8 }^{ 22125 } \allowbreak { 12 }^{ 1748 } \allowbreak { 13 }^{ 1 } \allowbreak { 16 }^{ 2502 } \allowbreak { 18 }^{ 9 } \allowbreak { 24 }^{ 1713 } \allowbreak { 32 }^{ 403 } \allowbreak { 36 }^{ 15 } \allowbreak { 39 }^{ 1 } \allowbreak { 48 }^{ 501 } \allowbreak { 64 }^{ 68 } \allowbreak { 72 }^{ 28 } \allowbreak { 96 }^{ 90 } \allowbreak { 128 }^{ 10 } \allowbreak { 144 }^{ 35 } \allowbreak { 192 }^{ 30 } \allowbreak { 216 }^{ 1 } \allowbreak { 256 }^{ 3 } \allowbreak { 288 }^{ 16 } \allowbreak { 336 }^{ 1 } \allowbreak { 384 }^{ 10 } \allowbreak { 432 }^{ 6 } \allowbreak { 576 }^{ 4 } \allowbreak { 768 }^{ 3 } \allowbreak { 864 }^{ 3 } \allowbreak { 1152 }^{ 1 } \allowbreak { 2016 }^{ 2 } \allowbreak { 2592 }^{ 1 } \allowbreak { 4032 }^{ 1 } \allowbreak { 12096 }^{ 1 } \allowbreak { 13824 }^{ 1 } \allowbreak $ & 175420488 & 87098667 & 8100133 & 315860 & 4266 & 60 & 1 \\
\bottomrule
\end{tabularx}
\end{center}
\end{table}

It is an interesting open question whether all full configurations are
really required in Theorem~\ref{thm:number} or whether a subset of them
would suffice. It is conjectured in~\cite{HPWY} that Theorem~\ref{thm:number}
is indeed strict. For $n = 4$, 5, and 6, this follows from the formulas
of~\cite{GGM},~\cite{DMGG}, and~\cite{F}, respectively, and the case of $n=6$
is handled explicitly in~\cite{HPWY}. The conjecture has also been verified
for $n=7$ in an unpublished study~\cite{U}.

An established approach~\cite{GG} to study the aforementioned conjecture
for small values of $n$
is to investigate the number of occurrences of full $m$-line configurations for
$m \le n$ in a set of (randomly chosen)
\STSS{v} for some fixed $v$. If there are $r$ configurations to consider,
we get for each \STS{v} a vector of length $r+1$
with nonnegative integers ($r$ counts and a constant, say 1).
Forming a matrix with rows consisting of those vectors, we check whether the
rank is $r+1$ (for which we obviously need at least $r+1$ vectors).

In the current study, this approach is used to
extend the earlier results to $n = 8$. Notice that the
following theorem actually confirms the old results
for $n \leq 7$, including the unpublished result~\cite{U}
for $n=7$.

\begin{theorem}\label{thm:8}
For $n \leq 8$, there is no full $n$-line configuration $C$ whose number
of occurrences in an\/ \STS{v} is a constant (as a function of $v$)
plus a linear combination (with coefficients that are functions
of $v$) of the numbers of occurrences of the full $m$-line
configurations with $m \leq n$ excluding $C$.
\end{theorem}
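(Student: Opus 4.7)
The plan is to execute the linear-algebraic program sketched in the paragraph preceding the theorem. Writing $r = 179$ for the total number of full $m$-line configurations with $4 \le m \le 8$ (namely $1+1+5+19+153$, by summing the $N$ column for $n=4,\dots,8$ in Table~\ref{tab:full}), it suffices to exhibit a single order $v$ and a family $\mathcal{S}_1,\dots,\mathcal{S}_N$ of \STSS{v} for which the $N \times (r+1)$ matrix with rows
\[
\bigl(1,\ c_1(\mathcal{S}_j),\ \dots,\ c_r(\mathcal{S}_j)\bigr),\qquad j=1,\dots,N,
\]
where $c_i(\mathcal{S}_j)$ is the number of occurrences of the $i$th full configuration in $\mathcal{S}_j$, has column rank $r+1 = 180$. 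Any hypothetical expression of one count as a constant plus a linear combination of the others, with coefficients that are arbitrary functions of $v$, specializes at a fixed $v$ to a scalar linear dependence among these $r+1$ columns, contradicting full rank; and since the case $n=8$ incorporates all full $m$-line configurations for $m \le 8$, verifying it also settles all smaller $n$ by restriction to an appropriate submatrix.

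To construct such a matrix I would first enumerate the $r = 179$ full configurations together with a minimum generating set for each, extending the enumeration of~\cite{FGG} to cover the $153$ full $8$-line configurations (the computational contribution underlying Table~\ref{tab:full}). Then I fix $v$ large enough to guarantee an ample supply of nonisomorphic \STSS{v} and nontrivial counts; $v=19$ comfortably suffices. I generate \STSS{v} one at a time, for example by random construction, and for each system apply the extension procedure of Theorem~\ref{thm:general}: iterate over all size-$k$ point subsets for $k \le 6$ (the maximum minimum generating-set size for $m \le 8$, by Table~\ref{tab:full}), compute the generated subconfiguration, and bin it by isomorphism class using a precomputed canonical form for each of the $179$ targets. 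This yields all $r$ counts for $\mathcal{S}_j$ in $O(v^6)$ time. After each new system I update the rank of the accumulating matrix by Gaussian elimination, preferably modulo a large prime for speed (and cross-checked modulo a second prime to rule out accidental cancellations), stopping as soon as the rank reaches $180$.

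The main obstacle is one of scale rather than concept. Compiling the list of $153$ full $8$-line configurations with canonical forms for on-the-fly isomorphism testing is itself a sizable enumeration, and the counting phase costs $O(v^6)$ per system with a considerable hidden constant from the bookkeeping over $179$ targets. In practice, however, the rank should saturate almost immediately once $N$ exceeds $r+1$, since the count vectors of "random" \STSS{v} may be expected to lie in general position with respect to any $(r+1)$-dimensional space; so the entire argument reduces to one clean, albeit large, linear-algebra certificate. Once the full-rank matrix has been exhibited, the theorem follows at once.
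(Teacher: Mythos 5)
Your proposal is essentially the paper's own proof: the authors form exactly this $(r+1)=180$-column count matrix from 623 distinct randomly chosen \STSS{25}, verify with {\tt gap} that its rank is 180, and rely on the same specialize-at-fixed-$v$ argument, together with the observation that full rank of the whole matrix settles the cases $n\le 7$ by restriction to subsets of columns. The only substantive difference is the choice of order---the paper uses $v=25$ rather than $v=19$---and since whether a given order actually yields rank $180$ is an empirical matter (extra linear relations could in principle hold at a particular small order), your assertion that $v=19$ ``comfortably suffices'' would need to be verified rather than assumed; this does not change the method.
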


\begin{proof}
For $n=8$, 623 distinct and randomly chosen \STSS{25} were considered,
and for each of these a vector of length 180 was determined as
described earlier (there are 179 full $m$-line configurations with
$m \leq 8$). Calculations using {\tt gap} show that the $623\times 180$
matrix formed by these vectors has rank 180.

The 623 Steiner triple systems considered
and the related vectors are published separately in~\cite{HO0}.
\end{proof}

See~\cite{B,C0,GG} for
further results on configurations in designs in general and in
Steiner triple systems in particular.

\subsection{\texorpdfstring{$w_3$ Configurations}{w3 Configurations}}

For configurations with a large number of lines, a more general
study is feasible only for specific types of configurations. We
here consider $w_3$ configurations, algorithms for which are needed
in~\cite{HO2}.
The number $N$ of isomorphism classes of $w_3$ configurations with small
$w$ can be found in~\cite{BBP,G}.
In Table~\ref{tab:w3} we list those values for $7 \leq n \leq 16$,
and for each entry we
give the same information as in Table~\ref{tab:full}.

\begin{table}[htbp]
\vspace{5mm}
\caption{Sizes of generating sets of $w_3$ configurations}\label{tab:w3}
\vspace*{-5mm}
\begin{center}
\small
\begin{tabularx}{\textwidth}{rrXrrrr}
\toprule
$w$ & $N$ & $|\operatorname{Aut}|$ & $N_3$ & $N_4$ & $N_5$ & $N_6$ \\
\midrule
7 & 1 & $ { 168 }^{ 1 } \allowbreak $ & 1 & 0 & 0 & 0 \\
8 & 1 & $ { 48 }^{ 1 } \allowbreak $ & 1 & 0 & 0 & 0 \\
9 & 3 & $ { 9 }^{ 1 } \allowbreak { 12 }^{ 1 } \allowbreak { 108 }^{ 1 } \allowbreak $ & 3 & 0 & 0 & 0 \\
10 & 10 & $ { 2 }^{ 1 } \allowbreak { 3 }^{ 2 } \allowbreak { 4 }^{ 2 } \allowbreak { 6 }^{ 1 } \allowbreak { 10 }^{ 1 } \allowbreak { 12 }^{ 1 } \allowbreak { 24 }^{ 1 } \allowbreak { 120 }^{ 1 } \allowbreak $ & 9 & 1 & 0 & 0 \\
11 & 31 & $ { 1 }^{ 10 } \allowbreak { 2 }^{ 13 } \allowbreak { 3 }^{ 1 } \allowbreak { 4 }^{ 2 } \allowbreak { 6 }^{ 3 } \allowbreak { 8 }^{ 1 } \allowbreak { 11 }^{ 1 } \allowbreak $ & 31 & 0 & 0 & 0 \\
12 & 229 & $ { 1 }^{ 146 } \allowbreak { 2 }^{ 60 } \allowbreak { 3 }^{ 3 } \allowbreak { 4 }^{ 3 } \allowbreak { 6 }^{ 8 } \allowbreak { 8 }^{ 1 } \allowbreak { 12 }^{ 3 } \allowbreak { 18 }^{ 1 } \allowbreak { 24 }^{ 1 } \allowbreak { 32 }^{ 1 } \allowbreak { 36 }^{ 1 } \allowbreak { 72 }^{ 1 } \allowbreak $ & 224 & 5 & 0 & 0 \\
13 & 2036 & $ { 1 }^{ 1770 } \allowbreak { 2 }^{ 190 } \allowbreak { 3 }^{ 20 } \allowbreak { 4 }^{ 30 } \allowbreak { 6 }^{ 16 } \allowbreak { 8 }^{ 3 } \allowbreak { 12 }^{ 4 } \allowbreak { 13 }^{ 1 } \allowbreak { 39 }^{ 1 } \allowbreak { 96 }^{ 1 } \allowbreak $ & 2010 & 26 & 0 & 0 \\
14 & 21399 & $ { 1 }^{ 20328 } \allowbreak { 2 }^{ 916 } \allowbreak { 3 }^{ 19 } \allowbreak { 4 }^{ 91 } \allowbreak { 6 }^{ 12 } \allowbreak { 7 }^{ 1 } \allowbreak { 8 }^{ 15 } \allowbreak { 12 }^{ 7 } \allowbreak { 14 }^{ 3 } \allowbreak { 16 }^{ 3 } \allowbreak { 24 }^{ 2 } \allowbreak { 128 }^{ 1 } \allowbreak { 56448 }^{ 1 } \allowbreak $ & 20798 & 599 & 1 & 1 \\
15 & 245342 & $ { 1 }^{ 241240 } \allowbreak { 2 }^{ 3709 } \allowbreak { 3 }^{ 69 } \allowbreak { 4 }^{ 180 } \allowbreak { 5 }^{ 5 } \allowbreak { 6 }^{ 59 } \allowbreak { 8 }^{ 34 } \allowbreak { 10 }^{ 3 } \allowbreak { 12 }^{ 11 } \allowbreak { 15 }^{ 2 } \allowbreak { 16 }^{ 10 } \allowbreak { 18 }^{ 1 } \allowbreak { 20 }^{ 2 } \allowbreak { 24 }^{ 2 } \allowbreak { 30 }^{ 2 } \allowbreak { 32 }^{ 1 } \allowbreak { 48 }^{ 6 } \allowbreak { 72 }^{ 1 } \allowbreak { 128 }^{ 1 } \allowbreak { 192 }^{ 2 } \allowbreak { 720 }^{ 1 } \allowbreak { 8064 }^{ 1 } \allowbreak $ & 222524 & 22809 & 8 & 1 \\
16 & 3004881 & $ { 1 }^{ 2986560 } \allowbreak { 2 }^{ 17119 } \allowbreak { 3 }^{ 320 } \allowbreak { 4 }^{ 635 } \allowbreak { 6 }^{ 88 } \allowbreak { 8 }^{ 93 } \allowbreak { 12 }^{ 19 } \allowbreak { 16 }^{ 24 } \allowbreak { 24 }^{ 7 } \allowbreak { 32 }^{ 5 } \allowbreak { 48 }^{ 5 } \allowbreak { 96 }^{ 2 } \allowbreak { 1512 }^{ 1 } \allowbreak { 2016 }^{ 1 } \allowbreak { 4608 }^{ 1 } \allowbreak { 18144 }^{ 1 } \allowbreak $ & 2260797 & 744045 & 35 & 4 \\
\bottomrule
\end{tabularx}
\end{center}
\end{table}

The unique $7_3$ and $8_3$ configurations are the Fano plane and
the M\"obius--Kantor configuration, respectively, and are depicted
in Figure~\ref{fig:fano}, again with generating sets indicated.

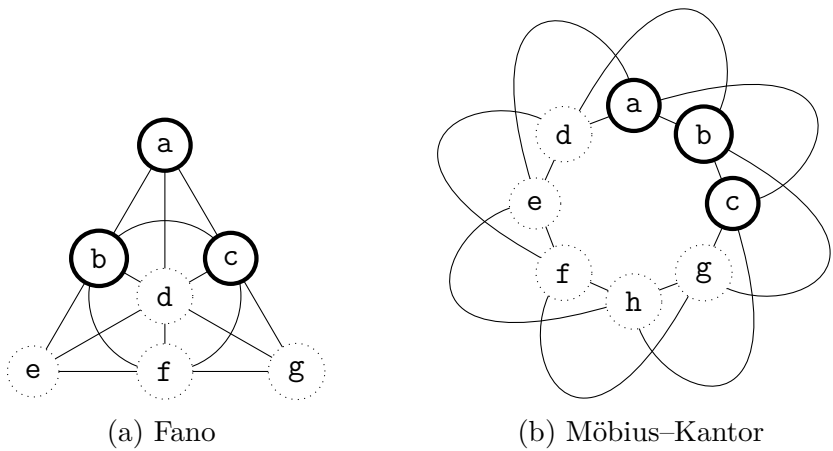
\begin{figure}[htbp]
\centering
\begin{subfigure}[b]{0.3\textwidth}
\centering
\begin{tikzpicture}
\coordinate (d) at (0:0);
\coordinate (c) at (30:1);
\coordinate (a) at (90:2);
\coordinate (b) at (150:1);
\coordinate (e) at (210:2);
\coordinate (f) at (270:1);
\coordinate (g) at (330:2);

\draw (a)--(b)--(e);
\draw (a)--(c)--(g);
\draw (a)--(d)--(f);
\draw (b)--(d)--(g);
\draw (c)--(d)--(e);
\draw (e)--(f)--(g);
\draw (d) circle [radius=1];

\foreach \x in {a,b,c} \node[draw,fill=white,ultra thick,circle,text width=width("b"),text centered] at (\x) {$\mypoint{\x}$};
\foreach \x in {d,e,f,g} \node[draw,fill=white,dotted,circle,text width=width("b"),text centered] at (\x) {$\mypoint{\x}$};
\end{tikzpicture}
\caption{Fano}\label{sf:fano}
\end{subfigure}
\begin{subfigure}[b]{0.6\textwidth}
\centering
\hspace*{-0.7cm}
\begin{tikzpicture}[scale=0.65]
\coordinate (a) at (90:2);
\coordinate (b) at (45:2);
\coordinate (c) at (0:2);
\coordinate (g) at (315:2);
\coordinate (h) at (270:2);
\coordinate (f) at (225:2);
\coordinate (e) at (180:2);
\coordinate (d) at (135:2);

\coordinate (ab) at ($(a)!5!(b)$);
\coordinate (bc) at ($(b)!5!(c)$);
\coordinate (cg) at ($(c)!5!(g)$);
\coordinate (gh) at ($(g)!5!(h)$);
\coordinate (hf) at ($(h)!5!(f)$);
\coordinate (fe) at ($(f)!5!(e)$);
\coordinate (ed) at ($(e)!5!(d)$);
\coordinate (da) at ($(d)!5!(a)$);

\coordinate (aa) at ($(0,0)!2!(a)$);
\coordinate (bb) at ($(0,0)!2!(b)$);
\coordinate (cc) at ($(0,0)!2!(c)$);
\coordinate (gg) at ($(0,0)!2!(g)$);
\coordinate (hh) at ($(0,0)!2!(h)$);
\coordinate (ff) at ($(0,0)!2!(f)$);
\coordinate (ee) at ($(0,0)!2!(e)$);
\coordinate (dd) at ($(0,0)!2!(d)$);

\draw (a) .. controls (ab) and (gg) .. (g);
\draw (b) .. controls (bc) and (hh) .. (h);
\draw (c) .. controls (cg) and (ff) .. (f);
\draw (g) .. controls (gh) and (ee) .. (e);
\draw (h) .. controls (hf) and (dd) .. (d);
\draw (f) .. controls (fe) and (aa) .. (a);
\draw (e) .. controls (ed) and (bb) .. (b);
\draw (d) .. controls (da) and (cc) .. (c);

\foreach \x in {a,b,c} \node[draw,fill=white,ultra thick,circle,text width=width("b"),text centered] at (\x) {$\mypoint{\x}$};
\foreach \x in {d,e,f,g,h} \node[draw,fill=white,dotted,circle,text width=width("b"),text centered] at (\x) {$\mypoint{\x}$};
\end{tikzpicture}
\vskip -2cm
\caption{M\"{o}bius--Kantor}\label{sf:mobiuskantor}
\end{subfigure}
\caption{The $7_3$ and $8_3$ configurations}\label{fig:fano}
\end{figure}

We call an $n_3$ configuration in a $v_3$ configuration a
\emph{subconfiguration} and say that such a subconfiguration
is \emph{proper} if $n < v$.

\begin{theorem}
If a $v_3$ configuration has a proper $n_3$ subconfiguration, then
it has a proper $(n-v)_3$ subconfiguration.
\end{theorem}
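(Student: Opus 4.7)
The plan is to construct the desired subconfiguration by complementation, taking everything in the ambient $v_3$ configuration that is \emph{not} in the given $n_3$ subconfiguration. Write $(V,\mathcal{B})$ for the ambient $v_3$ configuration and $(V_0,\mathcal{B}_0)$ for the proper $n_3$ subconfiguration; my target is to show that $(V \setminus V_0,\, \mathcal{B}\setminus \mathcal{B}_0)$ is a proper $(v-n)_3$ subconfiguration. The cardinalities $|V \setminus V_0| = |\mathcal{B} \setminus \mathcal{B}_0| = v-n$ are immediate, and properness follows since $n \geq 1$ gives $v - n < v$.

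The heart of the argument is a closure lemma: every line of $\mathcal{B}$ incident with a point of $V_0$ already lies in $\mathcal{B}_0$, and every point of $V$ on a line of $\mathcal{B}_0$ already lies in $V_0$. For the first statement, any $p \in V_0$ lies on exactly three lines of $\mathcal{B}_0$ (since $(V_0,\mathcal{B}_0)$ is $n_3$) and on exactly three lines of $\mathcal{B}$ (since $(V,\mathcal{B})$ is $v_3$); because $\mathcal{B}_0 \subseteq \mathcal{B}$, these two triples coincide, so no line of $\mathcal{B}\setminus\mathcal{B}_0$ can pass through $p$. The second statement is even simpler: a line in $\mathcal{B}_0$ has three points in $V_0$ and three points in $V$, and these must be the same three points.

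Using the closure lemma, verifying the complement is essentially bookkeeping. A point $p \in V\setminus V_0$ lies on exactly three lines of $\mathcal{B}$; none of them can be in $\mathcal{B}_0$ (else $p \in V_0$), so all three lie in $\mathcal{B}\setminus \mathcal{B}_0$. Dually, every line $\ell \in \mathcal{B}\setminus\mathcal{B}_0$ has its three points in $V \setminus V_0$ (else $\ell \in \mathcal{B}_0$). This shows that $V\setminus V_0$ equals the union of the lines in $\mathcal{B}\setminus \mathcal{B}_0$ (so the pair is a configuration in the sense of the paper) and that every point lies on exactly three of its lines, which is exactly the $(v-n)_3$ property.

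I do not anticipate any serious obstacle. The only nonobvious observation is the closure property, and it is essentially forced by the equality of the regularity parameter (three) in the ambient $v_3$ configuration and in the $n_3$ subconfiguration; once closure is in place, the complement construction works with no further combinatorial input.
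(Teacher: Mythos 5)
Your proof is correct and takes exactly the paper's approach: the paper's entire proof is the one-line instruction to consider the complement obtained by removing the points and lines of the $n_3$ subconfiguration, and your closure lemma and bookkeeping simply supply the verification the paper leaves implicit. (You also rightly read the stated conclusion $(n-v)_3$ as a typo for $(v-n)_3$.)
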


\begin{proof}
Consider the configuration obtained by removing the
points and lines of a proper $n_3$ subconfiguration from
the $v_3$ configuration.
\end{proof}

\begin{corollary}
If a $v_3$ configuration has a proper $n_3$ subconfiguration, then
$v \geq 14$.
\end{corollary}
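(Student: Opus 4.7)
The plan is to combine the preceding theorem with the well-known lower bound on the size of any $w_3$ configuration. Given a $v_3$ configuration with a proper $n_3$ subconfiguration (so $n<v$), the theorem supplies a second proper subconfiguration, namely a $(v-n)_3$ subconfiguration, obtained by deleting the points and lines of the first subconfiguration. So the argument reduces to showing that both $n$ and $v-n$ are at least $7$.

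For the lower bound, I would observe that in any $w_3$ configuration the $w$ lines cover $3w$ distinct pairs of points (distinct because any two points lie in at most one line of a partial Steiner triple system); hence
\[
3w \leq \binom{w}{2},
\]
which gives $w \geq 7$. Equivalently, one may simply cite the first row of Table~\ref{tab:w3}, which records that the smallest $w_3$ configuration is the Fano plane with $w=7$.

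Applying this bound to both subconfigurations yields $n \geq 7$ and $v-n \geq 7$, so $v \geq 14$, as required. There is no real obstacle here: the whole content of the corollary is packaged in the theorem, and what remains is the elementary counting bound $w \geq 7$ for $w_3$ configurations. The only small subtlety is verifying that the $(v-n)_3$ configuration produced by the theorem is genuinely nonempty and proper, which is immediate from $0 < n < v$.
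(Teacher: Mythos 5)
Your proposal is correct and matches the paper's (implicit) argument: the corollary is stated without proof, relying on the preceding theorem---whose ``$(n-v)_3$'' is evidently a typo for $(v-n)_3$---together with the fact that the smallest $w_3$ configuration is the Fano plane with $w=7$, so that $n\geq 7$ and $v-n\geq 7$ force $v\geq 14$. Your pair-counting derivation of $3w\leq\binom{w}{2}$, hence $w\geq 7$, is a clean self-contained justification of the step the paper leaves to Table~\ref{tab:w3} and the literature.
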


The smallest example of a $v_3$ configuration
with proper $n_3$ subconfigurations is of type $14_3$ and is
unique as the $7_3$ configuration is unique.
This particular configuration occurs in the \STSS{21} of Wilson
type, as discussed in~\cite{HO1}.

The size of the generating set of a $v_3$ configuration with proper
$n_3$ and $(v-n)_3$ subconfigurations equals the sum of the sizes of the
generating sets of those subconfigurations. All configurations corresponding to the
entries in column $N_6$ of Table~\ref{tab:w3} can be explained in
this way. This argument can be applied recursively.

\begin{theorem}
For any integer $d$, there is a $w_3$ configuration whose smallest generating set
has size greater than $d$.
\end{theorem}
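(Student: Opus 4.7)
The plan is to iterate the decomposition argument stated in the paragraph preceding the theorem: take the disjoint union of many copies of the Fano plane and appeal to the recursive generating-set identity to produce a $w_3$ configuration whose smallest generating set is arbitrarily large.

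Concretely, for $d\ge 1$ I would fix $d$ pairwise point-disjoint copies $F_1,\dots,F_d$ of the Fano plane and let $C$ be the set system on $V(F_1)\cup\dots\cup V(F_d)$ whose line set is the disjoint union of the line sets of the $F_i$. Then $C$ has $7d$ points and $7d$ lines and every point lies on exactly three lines, so $C$ is a $(7d)_3$ configuration. For any $1\le k<d$ the subsystem $F_1\cup\dots\cup F_k$ is a proper $(7k)_3$ subconfiguration whose complement $F_{k+1}\cup\dots\cup F_d$ is the corresponding proper $(7(d-k))_3$ subconfiguration. Applying the decomposition identity from the paragraph preceding the theorem recursively, together with the fact that the Fano plane has smallest generating set of size $3$ (three non-collinear points generate it, as indicated in Figure~\ref{sf:fano}, and two points clearly cannot generate anything beyond the unique line through them), one would conclude that the smallest generating set of $C$ has size $3d$, which is strictly greater than $d$.

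What would deserve explicit verification is the lower bound hidden in the decomposition identity for this construction. If $S\subseteq V(C)$ generates $C$, then because the line sets of the $F_i$ live on disjoint point sets, every line of $C$ is contained in a single $F_i$, so the closure of $S$ in $C$ decomposes as the disjoint union of the closures of the sets $S\cap V(F_i)$ taken inside $F_i$. Hence each $S\cap V(F_i)$ must itself generate $F_i$ and so has cardinality at least $3$, which yields $|S|\ge 3d$; the matching upper bound is immediate by picking three non-collinear points in each $F_i$. There is no real obstacle here: the whole construction is an explicit instance of the recursion foreshadowed by the remark ``This argument can be applied recursively'' just before the theorem statement, with the base case supplied by the Fano plane and no new combinatorial input required.
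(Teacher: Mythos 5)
Your proposal is correct and is essentially the paper's own proof: the paper takes the $(7m)_3$ configuration formed by $m$ disjoint $7_3$ subconfigurations (Fano planes) and notes that each requires $3$ points for generation, so any generating set has size $3m$. Your explicit verification that the closure of a generating set decomposes over the point-disjoint components is a welcome elaboration of the step the paper leaves implicit, but it is not a different argument.
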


\begin{proof}
Consider the $(7m)_3$ configuration consisting of $m$ $7_3$
subconfigurations. Each of the $7_3$ subconfigurations requires 3
points for generation, so the minimum number of points in a generating
set is $3m$.
\end{proof}

\section{Practical Algorithms}\label{sect:practice}

There are two situations when fast practical algorithms for counting
configurations in Steiner triple systems are needed: if there are
many Steiner triple systems to consider, as in~\cite{CX}, or if the
order of the Steiner triple systems is large, as in~\cite{HO2}.

The main challenge in this work is that---as we are interested in
average-case performance---in a formal analysis one should know
the
distribution of all possible inputs. An
experimental approach was taken here, and algorithms
were evaluated using random Steiner triple systems. The algorithms
were produced in an exhaustive manner that will be described later
in this section. This falls within the paradigm of using algorithms to
design algorithms~\cite{Betal}. Hopefully, the computational results
will also inspire analytical studies of these algorithms.

In this section, we apply the following conventions. For a
Steiner triple system $(V,\mathcal{B})$, we let $V=\{0,1,\ldots,v-1\}$.
In $(V,\mathcal{B})$, we want to count the number of occurrences of a configuration
$(V',\mathcal{B'})$ with $|V'| = w$ points, $|\mathcal{B'}|=b$ lines, and a
minimum generating set of size $m$.

We use two auxiliary functions $B_2:\{(x,y) \in V^2 : x \ne y\} \to V$ and $B_3:V^3 \to \{0,1\}$ so that
$B_2(x,y)=z$ iff $\{x,y,z\} \in \mathcal{B}$ and
\[
B_3(x,y,z)= \left\{
\begin{array}{l}
0,\mbox{\ \ \ if\ }  \{x,y,z\} \not\in \mathcal{B},\\
1,\mbox{\ \ \ if\ }  \{x,y,z\} \in \mathcal{B}.\\
\end{array}
\right.
\]
A precomputed array of size $v^2$ can be used to evaluate both of these
functions in constant time.

The configurations $(V',\mathcal{B'})$ we focus on are the $w_3$ configurations with
$w \le 8$ and the full $n$-line configurations with $n \le 6$, that is, the nine
configurations depicted in this paper.

\subsection{Generating Sets, Up to Symmetry}\label{sect:gen}

The main idea in the algorithms to be considered is that they
loop over values for elements in a generating set of size $m$.
A configuration may have many such generating sets, but the
number of generating sets to consider can be reduced by
utilizing symmetries of the configuration, that is, its automorphism
group $\Gamma$. Since the nesting of the loops in the algorithms imply an
order on the elements of a generating set, we specifically consider
representatives from the set $\mathcal{M}$ of
transversals of the action of $\Gamma$ on the \emph{ordered} generating sets.

The automorphism group of a configuration can also be utilized
to derive conditions on its points, so that occurrences will not be counted
multiple times. We denote the orbit of an element $x$ under
the action of $G$ by $G \cdot x$ and the stabilizer by
$\operatorname{Stab}_G(x)$. Let $Z = (z_1,z_2,\ldots ,z_m)$ be
an ordered generating set that is a permutation of an element
$M \in \mathcal{M}$. For $i \in \{1, 2, \ldots, m\}$, we now compute
$O_i = \Gamma_{i-1} \cdot z_i$ and
$\Gamma_i=\operatorname{Stab}_{\Gamma_{i-1}}(z_i)$, using $\Gamma_0=\Gamma$.
Whereas $M$ gives the order of the {\bf for} loops, $Z$ shows
where the conditions given by the symmetries will be taken
into account. The value $Q := |\Gamma_m|$ gives the number of times
each configuration will be encountered.

To get an exhaustive set of algorithms, for each $M \in \mathcal{M}$,
we consider each of the $m!$ possible choices of $Z$ as permutations of $M$
and each of the $2^m$ possible choices of $E = (e_1,e_2,\ldots ,e_m)$,
$e_i \in \{\min,\max\}$, where $e_i$ tells whether the corresponding
$z_i$ should be minimum or maximum in the orbit $O_i$. As the
isomorphism $i \mapsto v-1-i$ does not change the average-case behavior,
$e_1$ can be fixed to $\min$, which leaves $2^{m-1}$ possible choices.

The following concrete example, which is split into two parts,
demonstrates how one algorithm for the Fano plane is obtained.

\vspace{5mm}
{\bf Example.} (Fano plane, Figure~\ref{sf:fano})
The lines $\mathcal{B'}$ of the Fano
plane in Figure~\ref{sf:fano} are
$\{\{\mypoint{a},\mypoint{b},\mypoint{e}\},\allowbreak{}\{\mypoint{a},\mypoint{c},\mypoint{g}\},\allowbreak{}\{\mypoint{a},\mypoint{d},\mypoint{f}\},\allowbreak{}\{\mypoint{b},\mypoint{c},\mypoint{f}\},\allowbreak{}\{\mypoint{b},\mypoint{d},\mypoint{g}\},\allowbreak{}\{\mypoint{c},\mypoint{d},\mypoint{e}\},\allowbreak{}\{\mypoint{e},\mypoint{f},\mypoint{g}\}\}$.
Given a Steiner triple system,
the goal is now to set
the seven points
$\mypoint{a},\allowbreak{}\mypoint{b},\allowbreak{}\mypoint{c},\allowbreak{}\mypoint{d},\allowbreak{}\mypoint{e},\allowbreak{}\mypoint{f},\allowbreak{}\mypoint{g}$,
which we regard as variables, such that $(\{\mypoint{a},\mypoint{b},\mypoint{c},\mypoint{d},\mypoint{e},\mypoint{f},\mypoint{g}\},\mathcal{B'})$ is a configuration of
the system.

The Fano plane has $w=7$ points, $b=7$ lines, an automorphism group $\Gamma$
of order $|\Gamma|=168$, and a minimum generating set of size $m=3$.
It has 28 minimum generating sets, which are precisely the sets of three
non-collinear points, so there are $28\cdot 3! = 168$ sets of ordered
minimum generating sets. Those 168 sets form one orbit under the action
of $\Gamma$, and we can let
$\mathcal{M}=\{(\mypoint{a},\mypoint{b},\mypoint{c})\}$.
One possible choice is $Z = (\mypoint{c},\mypoint{b},\mypoint{a})$ and
$E = (\min,\min,\min)$.

We now get $O_1=\{\mypoint{a},\mypoint{b},\mypoint{c},\mypoint{d},\mypoint{e},\mypoint{f},\mypoint{g}\}$ (the automorphism group of the Fano plane is indeed
point-transitive), $|\Gamma_1| = 24$, $O_2=\{\mypoint{a},\mypoint{b},\mypoint{d},\mypoint{e},\mypoint{f},\mypoint{g}\}$, $|\Gamma_2| = 4$,
$O_3 = \{\mypoint{a},\mypoint{d},\mypoint{e},\mypoint{g}\}$,
and $|\Gamma_3| = Q = 1$.

The constraints are then $\mypoint{c} = \min \{\mypoint{a},\mypoint{b},\mypoint{c},\mypoint{d},\mypoint{e},\mypoint{f},\mypoint{g}\}$,
$\mypoint{b} = \min \{\mypoint{a},\mypoint{b},\mypoint{d},\mypoint{e},\mypoint{f},\mypoint{g}\}$, and
$\mypoint{a} = \min \{\mypoint{a},\mypoint{d},\mypoint{e},\mypoint{g}\}$,
which can be simplified to
$\mypoint{c}<\mypoint{b}<\mypoint{a}<\mypoint{d},\mypoint{e},\mypoint{g}$ and $\mypoint{b}<\mypoint{f}$.

\vspace{5mm}
Note that if $|\Gamma_i| = 1$ for some $i < m$,
then $|\Gamma_j| = 1$ and $|O_j|=1$
for $j > i$. The minimum and maximum of a 1-element set coincide,
and we then get identical algorithms regardless of the values of
$e_j$ and $z_j$ for $j>i$. Obviously, there are then no additional
constraints on some {\bf for} loop variables. This situation
occurs especially for configurations with very small automorphism groups,
such as the Crown configuration (automorphism group order 2).
For comparison, we actually also included in our experiments all
variants of algorithms where, for $i \in \{1,2,\ldots ,m\}$,
$\Gamma_j$ and $O_j$ are not determined for $j \geq i$.
Then $Q := |\Gamma_{i-1}|$ and some
{\bf for} loop variables do not have constraints. These variants are included
later in the last column of Table~\ref{tab:genalg} but are not further
discussed here as they were not successful in the experimental evaluation.

\subsection{Algorithm for Constructing Counting Algorithms}\label{sec:AlgConCountAlg}

We are now ready to discuss our approach for exhaustively constructing
algorithms for counting $(V',\mathcal{B'})$ configurations in
$(V,\mathcal{B})$.  For clarity, we only consider the case of
generating sets of size 3, but the approach can be extended to arbitrary
sizes of generating sets with further nesting of {\bf for} loops
(\KwCont forces the next iteration of the {\bf for} loop to take place).
The missing part, $\Omega$, is explained later.

\vspace{5mm}
{\small
\begin{algorithm}[H]
$r \leftarrow 0$ \tcp*{$\mathtt{A}_1$}
\For(\tcp*[f]{$\mathtt{F}_1$}){$x_{\mathtt{F}_1} \leftarrow y_{\mathtt{F}_1}$ \KwTo $z_{\mathtt{F}_1}$}{
  \For(\tcp*[f]{$\mathtt{F}_2$}){$x_{\mathtt{F}_2} \leftarrow y_{\mathtt{F}_2}$ \KwTo $z_{\mathtt{F}_2}$}{
    \lIf(\tcp*[f]{$\mathtt{C}_{\mathtt{F}_2}$}){$\mathbf{Check}(F_2)=0$}{\KwCont}
    $x_{\mathtt{S}_1} \leftarrow B_2(x_{\mathtt{F}_1},x_{\mathtt{F}_2})$ \tcp*{$\mathtt{S}_1$}
    \lIf(\tcp*[f]{$\mathtt{C}_{\mathtt{S}_1}$}){$\mathbf{Check}(S_1)=0$}{\KwCont}
    \For(\tcp*[f]{$\mathtt{F}_3$}){$x_{\mathtt{F}_3} \leftarrow y_{\mathtt{F}_3}$ \KwTo $z_{\mathtt{F}_3}$}{
      \lIf(\tcp*[f]{$\mathtt{C}_{\mathtt{F}_3}$}){$\mathbf{Check}(F_3)=0$}{\KwCont}
      $\Omega$\;
      $r \leftarrow r + 1$ \tcp*{$\mathtt{A}_2$}
    }
  }
}
\KwRet{$r/Q$} \tcp*{$\mathtt{A}_3$}
\end{algorithm}
}

\vspace{5mm}
The comments in the right margin of the algorithm describe the
type of action taken in the respective place.

\vspace{5mm}
\begin{tabular}{lp{10cm}}
$\mathtt{A}_i$ & Actions on the accumulator $r$ for counting configurations
          ($i=1$: initializing; $i=2$: increasing; $i=3$: final division
          as each configuration is seen $Q$ times)\\
$\mathtt{C}_i$ & Checks after setting variable $x_i$\\
$\mathtt{D}_B$ & Checks regarding existence of line $B$\\
$\mathtt{F}_i$ & {\bf for} loop for variable $x_{\mathtt{F}_i}$\\
$\mathtt{S}_i$ & Fixing variable $x_{\mathtt{S}_i}$
\end{tabular}

\vspace{5mm}
The variables $x_{i}$ contain the points of the configurations; these
are either inside ($x_{\mathtt{F}_i}$, $m$ variables) or outside
($x_{\mathtt{S}_i}$, $w-m$ variables)
a generating set. After fixing any point, there is a corresponding
check $\mathtt{C_i}$, to be discussed later. After fixing a point
$x_{\mathtt{F}_k}$ in a \textbf{for} loop (and the related check),
all variables in the closure of $\{x_{\mathtt{F}_1},\ldots,x_{\mathtt{F}_k}\}$
that have not been set so far are set (rows $\mathtt{S}_i$).
In $\mathtt{D}_B$, existence of lines needed for completing the
configuration is checked; there are $b-w+m$ such lines.

Let us next elaborate on the main details.

\begin{enumerate}

\item In the \textbf{for} loops, the values of $y_{\mathtt{F}_i}$ and
$z_{\mathtt{F}_i}$ are set based on constraints on elements being
minimum or maximum in orbits, as discussed in Section~\ref{sect:gen}.
With no restrictions, we would have
$y_{\mathtt{F}_i} = 0$ and $z_{\mathtt{F}_i} = v-1$; these can
be somewhat increased and decreased, respectively, when we
have a lower bound on the number of elements that are smaller
or larger, respectively.

\item In the test \textbf{Check(X)}, we incorporate further tests of elements
being minimum or maximum in orbits. Some of the tests are included
in the \textbf{for} loops, as discussed in Item~1; any other inequalities
that can be tested are included here. Whenever a point outside the generating
set is fixed, all such tests are carried out here.
Moreover, we need to make sure that a point that is fixed differs form
the points that have been fixed earlier. The situation that a point is
not new may occur both in a \textbf{for} loop and when fixing a point in
$\mathtt{S}_i$.

\item Whenever a variable is fixed, we check existence
of the lines $B$ of the configuration that have not been involved so far
in determining new points or lines $\mathtt{D}_B$ but whose points are fixed.
Indeed, an early test makes sense since the probability
of existence of a particular line in a random STS is $1/(v-2)$.

\end{enumerate}

The missing part in the algorithm, $\Omega$, is now as follows.
For each variable $x_{\mathtt{S}_i}$ that has not yet been set, we have a code
line of type $\mathtt{S}_i$ to assign a value to $x_{\mathtt{S}_i}$.
Then we have a code line of type $\mathtt{C}_{\mathtt{S}_i}$ to test whether
all constraints are fulfilled and whether $x_{\mathtt{S}_i}$ differs from
all points that have been fixed earlier. Finally, we have one code line
of type $\mathtt{D}_B$ for each line in the configuration that consists
of fixed points but did not occur in a code line of type
$\mathtt{S}_i$ or $\mathtt{D}_B$ so far.

Note that there may be several ways of building up a configuration from
a generating set, and we consider all possible such ways in the construction
of algorithms.

We can now finish the example for Fano planes
that we started in Section~\ref{sect:gen}.

\vspace{5mm}
{\bf Example (cont).}
Based on the calculations earlier in the example, in particular the constraints
$\mypoint{c}<\mypoint{b}<\mypoint{a}<\mypoint{d},\mypoint{e},\mypoint{g}$
and
$\mypoint{b}<\mypoint{f}$, we get the following
overall structure of the algorithm, where the parts $\Omega_1$ and
$\Omega_2$ are yet to be determined:

\vspace{5mm}
\begin{algorithm}[H]
$r \leftarrow 0$\;
\For{$\mypoint{a} \leftarrow 2$ \KwTo $v-4$}{
  \For{$\mypoint{b} \leftarrow 1$ \KwTo $\mypoint{a}-1$}{
    $\Omega_1$\;
    \For{$\mypoint{c} \leftarrow 0$ \KwTo $\mypoint{b}-1$}{
      $\Omega_2$\;
      $r \leftarrow r + 1$\;
    }
  }
}
\KwRet{$r$}
\end{algorithm}

\vspace{5mm}
Next, we will discuss the remaining $w-m=4$ variables $\mypoint{d},\mypoint{e},\mypoint{f},\mypoint{g}$. The sets of new points in $\Omega_1$ and $\Omega_2$ are
$G_1=\{\mypoint{e}\}$ and $G_2=\{\mypoint{d},\mypoint{f},\mypoint{g}\}$,
respectively. We here consider one feasible ordering of the elements of
$G_1$ and $G_2$: $(\mypoint{e})$ and $(\mypoint{g},\mypoint{d},\mypoint{f})$,
respectively.

In $\Omega_1$, there is only one way of setting $\mypoint{e}$, namely
$\{\mypoint{a},\mypoint{b},\mypoint{e}\}$, and the only constraint
is $\mypoint{a}<\mypoint{e}$. This also ensures distinctness
of all points so far, so for $\Omega_1$ we get

\vspace{5mm}
\begin{algorithm}[H]
$\mypoint{e} \leftarrow B_2(\mypoint{b},\mypoint{a})$\;
\lIf{$\mypoint{e} \le \mypoint{a} $}{\KwCont}
\end{algorithm}

\vspace{5mm}
In $\Omega_2$, one choice for setting
$\mypoint{g}$, $\mypoint{d}$, and $\mypoint{f}$ is
$\{\mypoint{a},\mypoint{c},\mypoint{g}\}$,
$\{\mypoint{c},\mypoint{d},\mypoint{e}\}$, and
$\{\mypoint{e},\mypoint{f},\mypoint{g}\}$, respectively.
This choice implies that the remaining lines to check are
$\{\mypoint{a},\mypoint{d},\mypoint{f}\}$,
$\{\mypoint{b},\mypoint{c},\mypoint{f}\}$, and
$\{\mypoint{b},\mypoint{d},\mypoint{g}\}$.

For the loop variable $\mypoint{c}$ we need to ensure that
$\mypoint{c} < \mypoint{b}$.
The tests related to
$\mypoint{g}$, $\mypoint{d}$, and $\mypoint{f}$
are $\mypoint{a}<\mypoint{g}$,
$\mypoint{a}<\mypoint{d}$, and
$\mypoint{b}<\mypoint{f}$, respectively.
These are also sufficient, as it can be verified that all points
obtained in this way are necessarily distinct (for example, $\mypoint{g}
\neq \mypoint{e}$ as $\{\mypoint{a},\mypoint{c},\mypoint{g}\}$ and
$\{\mypoint{a},\mypoint{b},\mypoint{e}\}$ are distinct lines through
$\mypoint{a}$
and $\mypoint{d} \neq \mypoint{e}$ as
$\{\mypoint{c},\mypoint{d},\mypoint{e}\}$ is a line).
Altogether, for $\Omega_2$ we have

\vspace{5mm}
\begin{algorithm}[H]
$\mypoint{g} \leftarrow B_2(\mypoint{c},\mypoint{a})$\;
\lIf{$\mypoint{g} \le \mypoint{a} $}{\KwCont}
$\mypoint{d} \leftarrow B_2(\mypoint{e},\mypoint{c})$\;
\lIf{$\mypoint{d} \le \mypoint{a} $}{\KwCont}
\lIf{$ B_3(\mypoint{b},\mypoint{d},\mypoint{g})=0 $}{\KwCont}
$\mypoint{f} \leftarrow B_2(\mypoint{g},\mypoint{e})$\;
\lIf{$\mypoint{f} \le \mypoint{b} $}{\KwCont}
\lIf{$ B_3(\mypoint{a},\mypoint{d},\mypoint{f})=0 $}{\KwCont}
\lIf{$ B_3(\mypoint{b},\mypoint{c},\mypoint{f})=0 $}{\KwCont}
\end{algorithm}

\vspace{5mm}
We then get the following complete algorithm.

\vspace{5mm}
\begin{algorithm}[H]
\caption{$7_3$-configuration (Fano plane) (Figure~\ref{sf:fano})}
$r \leftarrow 0$\;
\For{$\mypoint{a} \leftarrow 2$ \KwTo $v-4$}{
  \For{$\mypoint{b} \leftarrow 1$ \KwTo $\mypoint{a}-1$}{
    $\mypoint{e} \leftarrow B_2(\mypoint{b},\mypoint{a})$\;
    \lIf{$\mypoint{e} \le \mypoint{a} $}{\KwCont}
    \For{$\mypoint{c} \leftarrow 0$ \KwTo $\mypoint{b}-1$}{
      $\mypoint{g} \leftarrow B_2(\mypoint{c},\mypoint{a})$\;
      \lIf{$\mypoint{g} \le \mypoint{a} $}{\KwCont}
      $\mypoint{d} \leftarrow B_2(\mypoint{e},\mypoint{c})$\;
      \lIf{$\mypoint{d} \le \mypoint{a} $}{\KwCont}
      \lIf{$ B_3(\mypoint{b},\mypoint{d},\mypoint{g})=0 $}{\KwCont}
      $\mypoint{f} \leftarrow B_2(\mypoint{g},\mypoint{e})$\;
      \lIf{$\mypoint{f} \le \mypoint{b} $}{\KwCont}
      \lIf{$ B_3(\mypoint{a},\mypoint{d},\mypoint{f})=0 $}{\KwCont}
      \lIf{$ B_3(\mypoint{b},\mypoint{c},\mypoint{f})=0 $}{\KwCont}
      $r \leftarrow r + 1$\;
    }
  }
}
\KwRet{$r$}
\vspace{5mm}
\end{algorithm}

\subsection{Experimental Evaluation}

We constructed all possible algorithms with the approach discussed earlier
and carried out an experimental evaluation. The authors are well aware of
the challenges involved in such work. Above all, the performance of
the algorithms depends on issues that are difficult or impossible to
control, related to compilers and microprocessors. In particular, the
programs contain many \textbf{if} statements and can be demanding
for the technique of speculative execution used commonly by
modern CPUs~\cite{K0}.

Table~\ref{tab:genalg} summarizes the main details for the generation
process of algorithms for all configurations in consideration here.
The column ``Name'' is the name of the configuration, and $b$, $w$,
$m$, $|\Gamma|$, $|\mathcal{M}_s|$, $|\mathcal{M}|$, and
$|\mathcal{M} / \Gamma|$ are the number of lines, points,
elements in a generating set of minimum size, automorphisms,
minimum generating sets, minimum ordered generating sets, and orbits
of minimum ordered generating sets under the action of the automorphism
group $\Gamma$, respectively.
The last column $A$ gives the number of distinct algorithms generated.

\begin{table}
\caption{Details for the generated algorithms}\label{tab:genalg}
\centering
\begin{tabular}{lrrrrrrrrr}
\toprule
Name & $b$ & $w$ &  $m$  & $|\Gamma|$ & $|\mathcal{M}_s|$ & $|\mathcal{M}|$  & $|\mathcal{M} / \Gamma|$  & $A$  \\
\midrule
Pasch & 4 & 6 & 3 & 24 & 16 & 96 & 4 & 296 \\
Mitre & 5 & 7 & 3 & 12 & 30 & 180 & 15 & 1272 \\
Fano--line & 6 & 7 & 3 & 24 & 28 & 168 & 7 & 2020 \\
Crown & 6 & 8 & 3 & 2 & 46 & 276 & 138 & 7348 \\
Hexagon & 6 & 8 & 3 & 12 & 48 & 288 & 24 & 2912 \\
Prism & 6 & 9 & 4 & 12 & 75 & 1800 & 150 & 60872 \\
Grid & 6 & 9 & 4 & 72 & 81 & 1944 & 27 & 34752 \\
\midrule
Fano & 7 & 7 & 3 & 168 & 28 & 168 & 1 & 828 \\
M\"{o}bius--Kantor & 8 & 8 & 3 & 48 & 48 & 288 & 6 & 9216 \\
\bottomrule
\end{tabular}
\end{table}

For each configuration, we evaluated each of the $A$ algorithms on random
Steiner triple systems
constructed by Stinson's hill-climbing algorithm~\cite{S0}.
This evaluation took part in three phases.

In the first phase, for each configuration,
we considered as many random Steiner triple systems
of order $v=93$ as could be
considered in 0.5~seconds (at least one though). In the second phase,
this was repeated for the $\max\{100,\lceil 0.01 \cdot A \rceil\}$
algorithms with the smallest average time per configuration from the first phase.
Moreover, in the second phase, 60~seconds was used for each algorithm
and the order $v=121$ was considered.
Finally, in the third phase, this was repeated for the five
algorithms with the smallest average time per configuration from the
second phase with 600~seconds time and $v=151$.

All computations were performed on 2.30-GHz Intel Xeon Gold~6140 CPUs using the
programming language \texttt{C++} compiled with the GNU Compiler Collection.

The fastest algorithm per configuration is determined as the algorithm with
smallest average time in Phase~3.
It turned out that these fastest algorithms are among the two best in Phase~2
and among the five best in Phase~1 for all configurations.
The difference in average time between the fastest and second fastest
algorithms in Phase~3, normalized to the fastest, was less than
3.2\% for all configurations that were considered.

We conclude the paper by listing the nine algorithms obtained in the aforementioned
manner as Algorithm~1 to~9;
Algorithm~1 is given in the example in Section~\ref{sec:AlgConCountAlg}.
Any scholar needing such algorithms should be able to
implement them easily in any programming language.

{
\scriptsize
\setlength{\algomargin}{0em}

\begin{minipage}[t]{0.5\textwidth}

\begin{algorithm}[H]
\caption{Pasch configuration (Figure~\ref{sf:pasch})}
$r \leftarrow 0$\;
\For{$\mypoint{a} \leftarrow 0$ \KwTo $v-6$}{
  \For{$\mypoint{b} \leftarrow \mypoint{a}+1$ \KwTo $v-2$}{
    $\mypoint{e} \leftarrow B_2(\mypoint{b},\mypoint{a})$\;
    \lIf{$\mypoint{e} \le \mypoint{a} $}{\KwCont}
    \For{$\mypoint{f} \leftarrow \max\{\mypoint{b},\mypoint{e}\}+1$ \KwTo $v-1$}{
      $\mypoint{c} \leftarrow B_2(\mypoint{f},\mypoint{a})$\;
      \lIf{$\mypoint{f} \le \mypoint{c} \lor \mypoint{c} \le \mypoint{a} $}{\KwCont}
      $\mypoint{d} \leftarrow B_2(\mypoint{f},\mypoint{e})$\;
      \lIf{$\mypoint{d} \le \mypoint{a} $}{\KwCont}
      \lIf{$ B_3(\mypoint{b},\mypoint{c},\mypoint{d})=0 $}{\KwCont}
      $r \leftarrow r + 1$\;
    }
  }
}
\KwRet{$r$}
\vspace{5mm}
\end{algorithm}

\begin{algorithm}[H]
\caption{Mitre configuration (Figure~\ref{sf:mitre})}
$r \leftarrow 0$\;
\For{$\mypoint{a} \leftarrow 0$ \KwTo $v-3$}{
  \For{$\mypoint{c} \leftarrow \mypoint{a}+1$ \KwTo $v-2$}{
    $\mypoint{f} \leftarrow B_2(\mypoint{c},\mypoint{a})$\;
    \For{$\mypoint{e} \leftarrow \max\{\mypoint{c},\mypoint{f}\}+1$ \KwTo $v-1$}{
      $\mypoint{g} \leftarrow B_2(\mypoint{f},\mypoint{e})$\;
      $\mypoint{b} \leftarrow B_2(\mypoint{g},\mypoint{a})$\;
      \lIf{$\mypoint{e} \le \mypoint{b} $}{\KwCont}
      $\mypoint{d} \leftarrow B_2(\mypoint{g},\mypoint{c})$\;
      \lIf{$\mypoint{e} \le \mypoint{d} $}{\KwCont}
      \lIf{$ B_3(\mypoint{b},\mypoint{d},\mypoint{e})=0 $}{\KwCont}
      $r \leftarrow r + 1$\;
    }
  }
}
\KwRet{$r$}
\vspace{5mm}
\end{algorithm}

\end{minipage}
\begin{minipage}[t]{0.5\textwidth}

\begin{algorithm}[H]
\caption{Fano--line configuration (Figure~\ref{sf:fanoline})}
$r \leftarrow 0$\;
\For{$\mypoint{c} \leftarrow 2$ \KwTo $v-2$}{
  \For{$\mypoint{e} \leftarrow 0$ \KwTo $\mypoint{c}-2$}{
    $\mypoint{b} \leftarrow B_2(\mypoint{c},\mypoint{e})$\;
    \For{$\mypoint{f} \leftarrow \mypoint{e}+1$ \KwTo $\mypoint{c}-1$}{
      \lIf{$\mypoint{f} = \mypoint{b} $}{\KwCont}
      $\mypoint{g} \leftarrow B_2(\mypoint{f},\mypoint{b})$\;
      \lIf{$\mypoint{g} \le \mypoint{c} $}{\KwCont}
      $\mypoint{d} \leftarrow B_2(\mypoint{e},\mypoint{g})$\;
      \lIf{$ B_3(\mypoint{c},\mypoint{d},\mypoint{f})=0 $}{\KwCont}
      $\mypoint{a} \leftarrow B_2(\mypoint{f},\mypoint{e})$\;
      \lIf{$ B_3(\mypoint{a},\mypoint{c},\mypoint{g})=0 $}{\KwCont}
      $r \leftarrow r + 1$\;
    }
  }
}
\KwRet{$r$}
\vspace{5mm}
\end{algorithm}

\begin{algorithm}[H]
\caption{Crown configuration (Figure~\ref{sf:crown})}
$r \leftarrow 0$\;
\For{$\mypoint{f} \leftarrow 0$ \KwTo $v-2$}{
  \For{$\mypoint{e} \leftarrow 0$ \KwTo $v-1$}{
    \lIf{$\mypoint{e} = \mypoint{f} $}{\KwCont}
    $\mypoint{h} \leftarrow B_2(\mypoint{f},\mypoint{e})$\;
    \For{$\mypoint{g} \leftarrow \mypoint{f}+1$ \KwTo $v-1$}{
      \lIf{$\mypoint{g} \in \{ \mypoint{e},\mypoint{h} \}  $}{\KwCont}
      $\mypoint{d} \leftarrow B_2(\mypoint{e},\mypoint{g})$\;
      $\mypoint{b} \leftarrow B_2(\mypoint{f},\mypoint{d})$\;
      $\mypoint{c} \leftarrow B_2(\mypoint{h},\mypoint{g})$\;
      \lIf{$\mypoint{c} = \mypoint{b} $}{\KwCont}
      $\mypoint{a} \leftarrow B_2(\mypoint{f},\mypoint{g})$\;
      \lIf{$ B_3(\mypoint{a},\mypoint{b},\mypoint{c})=0 $}{\KwCont}
      $r \leftarrow r + 1$\;
    }
  }
}
\KwRet{$r$}
\vspace{5mm}
\end{algorithm}

\end{minipage}
\begin{minipage}[t]{0.5\textwidth}

\begin{algorithm}[H]
\caption{Hexagon configuration (Figure~\ref{sf:hexagon})}
$r \leftarrow 0$\;
\For{$\mypoint{b} \leftarrow 0$ \KwTo $v-6$}{
  \For{$\mypoint{c} \leftarrow \mypoint{b}+2$ \KwTo $v-1$}{
    $\mypoint{a} \leftarrow B_2(\mypoint{c},\mypoint{b})$\;
    \For{$\mypoint{d} \leftarrow \mypoint{b}+1$ \KwTo $\mypoint{c}-1$}{
      \lIf{$\mypoint{d} = \mypoint{a} $}{\KwCont}
      $\mypoint{e} \leftarrow B_2(\mypoint{d},\mypoint{a})$\;
      \lIf{$\mypoint{e} \le \mypoint{b} $}{\KwCont}
      $\mypoint{h} \leftarrow B_2(\mypoint{d},\mypoint{b})$\;
      $\mypoint{g} \leftarrow B_2(\mypoint{h},\mypoint{c})$\;
      \lIf{$\mypoint{g} \le \mypoint{b} \lor \mypoint{g} = \mypoint{e} $}{\KwCont}
      $\mypoint{f} \leftarrow B_2(\mypoint{h},\mypoint{e})$\;
      \lIf{$\mypoint{f} \le \mypoint{b} $}{\KwCont}
      \lIf{$ B_3(\mypoint{a},\mypoint{f},\mypoint{g})=0 $}{\KwCont}
      $r \leftarrow r + 1$\;
    }
  }
}
\KwRet{$r$}
\vspace{5mm}
\end{algorithm}

\begin{algorithm}[H]
\caption{Prism configuration (Figure~\ref{sf:prism})}
$r \leftarrow 0$\;
\For{$\mypoint{a} \leftarrow 1$ \KwTo $v-2$}{
  \For{$\mypoint{f} \leftarrow 0$ \KwTo $\mypoint{a}-1$}{
    \For{$\mypoint{b} \leftarrow 0$ \KwTo $v-1$}{
      \lIf{$\mypoint{b} \in \{ \mypoint{a},\mypoint{f} \}  $}{\KwCont}
      $\mypoint{e} \leftarrow B_2(\mypoint{b},\mypoint{a})$\;
      \lIf{$\mypoint{e} = \mypoint{f} $}{\KwCont}
      $\mypoint{c} \leftarrow B_2(\mypoint{f},\mypoint{b})$\;
      $\mypoint{h} \leftarrow B_2(\mypoint{e},\mypoint{c})$\;
      \lIf{$\mypoint{h} \le \mypoint{a} $}{\KwCont}
      \For{$\mypoint{d} \leftarrow \mypoint{e}+1$ \KwTo $v-1$}{
        \lIf{$\mypoint{d} \in \{ \mypoint{a},\mypoint{b},\mypoint{c},\mypoint{f},\mypoint{h} \}  $}{\KwCont}
        $\mypoint{g} \leftarrow B_2(\mypoint{d},\mypoint{a})$\;
        \lIf{$\mypoint{g} \in \{ \mypoint{c},\mypoint{f},\mypoint{h} \}  $}{\KwCont}
        $\mypoint{i} \leftarrow B_2(\mypoint{h},\mypoint{d})$\;
        \lIf{$\mypoint{i} \in \{ \mypoint{b},\mypoint{f} \}  $}{\KwCont}
        \lIf{$ B_3(\mypoint{f},\mypoint{g},\mypoint{i})=0 $}{\KwCont}
        $r \leftarrow r + 1$\;
      }
    }
  }
}
\KwRet{$r$}
\vspace{5mm}
\end{algorithm}

\end{minipage}
\begin{minipage}[t]{0.5\textwidth}

\begin{algorithm}[H]
\caption{Grid configuration (Figure~\ref{sf:grid})}
$r \leftarrow 0$\;
\For{$\mypoint{a} \leftarrow 0$ \KwTo $v-9$}{
  \For{$\mypoint{b} \leftarrow \mypoint{a}+1$ \KwTo $v-3$}{
    $\mypoint{d} \leftarrow B_2(\mypoint{b},\mypoint{a})$\;
    \lIf{$\mypoint{d} \le \mypoint{b} $}{\KwCont}
    \For{$\mypoint{e} \leftarrow \mypoint{d}+1$ \KwTo $v-1$}{
      $\mypoint{g} \leftarrow B_2(\mypoint{e},\mypoint{a})$\;
      \lIf{$\mypoint{e} \le \mypoint{g} \lor \mypoint{g} \le \mypoint{a} $}{\KwCont}
      \For{$\mypoint{c} \leftarrow \mypoint{a}+1$ \KwTo $v-1$}{
        \lIf{$\mypoint{c} \in \{ \mypoint{b},\mypoint{d},\mypoint{e},\mypoint{g} \}  $}{\KwCont}
        $\mypoint{f} \leftarrow B_2(\mypoint{c},\mypoint{b})$\;
        \lIf{$\mypoint{f} \le \mypoint{a} \lor \mypoint{f} \in \{ \mypoint{e},\mypoint{g} \}  $}{\KwCont}
        $\mypoint{h} \leftarrow B_2(\mypoint{e},\mypoint{c})$\;
        \lIf{$\mypoint{h} \le \mypoint{a} \lor \mypoint{h} = \mypoint{d} $}{\KwCont}
        $\mypoint{i} \leftarrow B_2(\mypoint{g},\mypoint{f})$\;
        \lIf{$\mypoint{i} \le \mypoint{a} \lor \mypoint{i} \in \{ \mypoint{d},\mypoint{h} \}  $}{\KwCont}
        \lIf{$ B_3(\mypoint{d},\mypoint{h},\mypoint{i})=0 $}{\KwCont}
        $r \leftarrow r + 1$\;
      }
    }
  }
}
\KwRet{$r$}
\vspace{5mm}
\end{algorithm}

\begin{algorithm}[H]
\caption{$8_3$-configuration (M\"{o}bius--Kantor) (Figure~\ref{sf:mobiuskantor})}
$r \leftarrow 0$\;
\For{$\mypoint{a} \leftarrow 1$ \KwTo $v-6$}{
  \For{$\mypoint{b} \leftarrow \mypoint{a}+1$ \KwTo $v-1$}{
    $\mypoint{g} \leftarrow B_2(\mypoint{b},\mypoint{a})$\;
    \lIf{$\mypoint{g} \le \mypoint{a} $}{\KwCont}
    \For{$\mypoint{c} \leftarrow 0$ \KwTo $\mypoint{a}-1$}{
      $\mypoint{d} \leftarrow B_2(\mypoint{c},\mypoint{a})$\;
      \lIf{$\mypoint{d} \le \mypoint{a} $}{\KwCont}
      $\mypoint{h} \leftarrow B_2(\mypoint{c},\mypoint{b})$\;
      \lIf{$\mypoint{h} \le \mypoint{a} $}{\KwCont}
      $\mypoint{e} \leftarrow B_2(\mypoint{d},\mypoint{b})$\;
      \lIf{$\mypoint{e} \le \mypoint{c} $}{\KwCont}
      \lIf{$ B_3(\mypoint{e},\mypoint{g},\mypoint{h})=0 $}{\KwCont}
      $\mypoint{f} \leftarrow B_2(\mypoint{e},\mypoint{a})$\;
      \lIf{$\mypoint{f} \le \mypoint{a} $}{\KwCont}
      \lIf{$ B_3(\mypoint{c},\mypoint{f},\mypoint{g})=0 $}{\KwCont}
      \lIf{$ B_3(\mypoint{d},\mypoint{f},\mypoint{h})=0 $}{\KwCont}
      $r \leftarrow r + 1$\;
    }
  }
}
\KwRet{$r$}
\vspace{5mm}
\end{algorithm}
\end{minipage}
}

\small

\end{document}